\documentclass{amsart}

\usepackage{amssymb, amsmath, color}
\usepackage{mathrsfs}
\usepackage{amscd}
\usepackage{verbatim}
\usepackage{enumerate}
\allowdisplaybreaks

\usepackage{hyperref}



\theoremstyle{plain}
\newtheorem{theorem}{Theorem}[section]
\theoremstyle{remark}
\newtheorem{remark}[theorem]{Remark}
\newtheorem{example}[theorem]{Example}
\newtheorem{facts}[theorem]{Facts}
\theoremstyle{plain}
\newtheorem{corollary}[theorem]{Corollary}
\newtheorem{lemma}[theorem]{Lemma}
\newtheorem{proposition}[theorem]{Proposition}
\newtheorem{definition}[theorem]{Definition}

\numberwithin{equation}{section}


\def\N{{\mathbb N}}
\def\Z{{\mathbb Z}}

\def\R{{\mathbb R}}
\def\C{{\mathbb C}}
\def\T{{\mathbb T}}
\def\Pol{{\mathcal P}}


\newcommand{\E}{{\mathbb E}}
\renewcommand{\P}{{\mathbb P}}
\newcommand{\F}{{\mathcal F}}
\newcommand{\A}{{\mathcal A}}

\newcommand{\M}{{\mathcal M^q(\Z)}}

\newcommand{\OO}{\mathcal{O}}


\newcommand{\eps}{\varepsilon}

\renewcommand{\O}{\Omega}



\newcommand{\D}{{\mathcal D}}
\newcommand{\calL}{{\mathscr L}}

\newcommand{\one}{{{\bf 1}}}

\newcommand{\lb}{\langle}
\newcommand{\rb}{\rangle}

\newcommand{\limn}{\lim_{n\to\infty}}

\begin{document}

\title[Stochastic parabolicity condition]{Is the stochastic parabolicity condition dependent on $p$ and $q$?}

\date\today

\author{Zdzislaw Brze\'zniak}
\address{Department of Mathematics\\University of York\\
York, YO10 5DD\\ England}
\email{zb500@york.ac.uk}

\author{Mark Veraar}
\address{Delft Institute of Applied Mathematics\\
Delft University of Technology \\ P.O. Box 5031\\ 2600 GA Delft\\The
Netherlands} \email{M.C.Veraar@tudelft.nl}

\subjclass[2000]{Primary: 60H15 Secondary: 35R60}

\keywords{stochastic parabolicity condition, parabolic stochastic evolution
equation, multiplicative noise, gradient noise, blow-up, strong solution, mild
solution, maximal regularity, stochastic partial differential equation}


\begin{abstract}
In this paper we study well-posedness of a second order SPDE with multiplicative noise on the torus $\T = [0,2\pi]$. The equation is considered in $L^p((0,T)\times\O;L^q(\T))$ for $p,q\in (1, \infty)$. It is well-known that if the noise is of gradient type, one needs a stochastic parabolicity condition on the coefficients for well-posedness with $p=q=2$. In this paper we investigate whether the well-posedness depends on $p$ and $q$. It turns out that this condition does depend on $p$, but not on $q$. Moreover, we show that if $1<p<2$ the classical stochastic parabolicity condition can be weakened.
\end{abstract}

\thanks{The second author was supported by a VENI subsidy 639.031.930 of the Netherlands Organisation for Scientific Research (NWO)}
\maketitle

\section{Introduction}

\subsection{Setting}
Let $X$  be a separable Hilbert space with the scalar product and norm denoted respectively by $(\cdot,\cdot)$ and $\Vert \cdot \Vert$. Consider the following stochastic evolution equation on $X$:
\begin{equation}\label{eq:intro}
\left\{
  \begin{array}{rll}
    dU(t) + A U(t) \, dt &=  2 B U(t) \, dW(t) , & t\in \R_+,\\
    U(0)&=u_0.
  \end{array}
\right.
\end{equation}
Here $A$ is a linear positive self-adjoint operator with dense domain $D(A)\subseteq X$,  $B:D(A)\to D(A^{1/2})$ is a linear operator and $W(t)$, $t\geq 0$ is a real valued standard Wiener process (defined on some filtered probability space).

In \cite{KR79,Par2}, see also the monograph \cite{Rozov} and the lecture notes \cite{PreRo}, the  well-posedness of a large class of stochastic equations on $X$ has been considered, which includes equations of the form \eqref{eq:intro}.
In these papers the main assumption for the well-posedness in $L^2(\O;X)$ is:
\begin{itemize}
\item There exist $c>0$ and $K>0$ such that
\begin{equation}\label{eq:coer}
 2\|B x\| + c\|A^{1/2} x\|^2\leq (A x,x) + K\|x\|, \ \ x\in D(A).
\end{equation}
\end{itemize}
This condition will be called {\em the classical stochastic parabolicity condition}. Under condition \eqref{eq:coer} (and several others), for every  $u_0\in X$, there exists a unique solution $U\in L^2((0,T)\times\O;D(A^{1/2}))$ to \eqref{eq:intro}.
From \cite{KR79}  it is known that the condition \eqref{eq:coer} is also necessary for well-posedness, and the simple example which illustrates this, is recalled below for convenience of the reader, see \eqref{eq:kryintro}.

For Banach spaces $X$, \eqref{eq:coer} has no meaning and it has to be
reformulated. One way to do this is to assume that $A - 2 B^2$ is a ``good''
operator in $X$. There are several positive results where this assumption is
used. For instance in \cite{BCFapprx,DPIT} (in a Hilbert space setting) and
\cite{BNVW} (in a UMD Banach space setting), well-posedness for
\eqref{eq:intro} was proved. In particular, it is assumed
that $B$ is a group generator in these papers. Using It\^o's formula this allows to
reformulate \eqref{eq:intro} as a deterministic problem which can be solved
pathwise in many cases, cf. \eqref{eq:kryintro} and \eqref{eq:det}.

A widely used method to study equations of the form \eqref{eq:intro} is the Banach fixed point theorem together with the mild formulation of
\eqref{eq:intro}, see \cite{DPZ}. In order to apply this with an operator $B$ which is of half of the order of $A$ one requires maximal regularity of the stochastic convolution. To be more precise, the fixed point map $L$ of the form
\[L U (t) = \int_0^t e^{-(t-s)A} B U(s)\, dW(s)\]
has to map the adapted subspace of $L^p((0,T)\times\O;D(A))$ into itself. If one knows this, it can still be difficult to prove that $L$ is a contraction, and usually one needs that $\|B\|$ is small. Some exceptions where one can avoid this assumption are:
\begin{enumerate}[$(1)$]
\item The case where $B$ generates a group, see the previous paragraph.
\item Krylov's $L^p$-theory for second order scalar SPDEs on $\R^d$ (where $B$ is of group-type as well).
\item The Hilbert space situation with $p=2$, see \cite{KR79,Par2,Rozov} and \cite{DP82}.
\end{enumerate}
Recently, in \cite{NVW10,NVW11} a maximal regularity result for equations such as \eqref{eq:intro} has been obtained. With these results one can prove the  well-posedness results in the case $\|B\|$ is small, $X = L^q$ and $A$ has a so-called bounded $H^\infty$-calculus. A natural question is what the role of the smallness assumptions on $\|B\|$ is. In this paper we provide a complete answer to this question in the case of problem \eqref{eq:introeq} below.

\subsection{Known results for the second order stochastic parabolic equations}
In \cite{Kry}, second order equations with gradient noise have been studied.
We emphasize that the equation in \cite{Kry} is much more involved than the equation below, and we
only consider a very special case here. Consider \eqref{eq:intro} with $A =
-\Delta$ and $B = \alpha D$, where $D=\frac{\partial}{\partial x}$ and
$\alpha$ is a real constant.
\begin{equation}\label{eq:kryintro}
\left\{
  \begin{array}{rll}
    du(t) &= \Delta u(t,x) \, dt + 2 \alpha D u(t,x) \, dW(t) , & t\in \R_+, x\in \R, \\
    u(0,x)&=u_0(x), & x\in \R.
  \end{array}
\right.
\end{equation}
In this case the classical stochastic parabolicity condition \eqref{eq:coer} is $\tfrac{1}{2}(2\alpha)^2 = 2\alpha^2 <1$. Krylov proved in  \cite{Kry} and \cite{Kry00} that problem \eqref{eq:kryintro} is well-posed in $L^p(\O;L^p(\R))$ with $p\in [2, \infty)$ and in
$L^p(\O;L^q(\R))$ with $p\geq q\geq 2$, under the same assumption $2\alpha^2<1$.
In \cite[Final example]{KR79}  he  showed that if $2\alpha^2\geq 1$, then no regular solution exists. This can also be proved with the methods in \cite{BCFapprx, BNVW, DPIT}. Indeed, if $u:[0,T]\times \O\to L^q(\R)$ is a solution to \eqref{eq:kryintro}, then one can introduce a new process $v$ defined by $v(t) = e^{-B W(t)} u(t)$, $t\in \mathbb{R}_+$, where we used our  assumption that $B$ generates a group. Note that $u(t) = e^{B W(t)} v(t)$, $t\in \mathbb{R}_+$. Applying the It\^o formula one sees that $v$ satisfies the PDE:
\begin{equation}\label{eq:det}
\left\{
  \begin{array}{rll}
    dv(t) &= (1-2\alpha^2) \Delta v(t,x) \, dt, & t\in \R_+, x\in \R, \\
    v(0,x)&=u_0(x), & x\in \R.
  \end{array}
\right.
\end{equation}
Now, it is well-known from the theory of the deterministic  parabolic equations that the above problem is well-posed if and only if $2\alpha^2\leq 1$. Moreover, there is a regularizing effect if and only if $2\alpha^2<1$, see \cite[Final example]{KR79} for a different argument.

\subsection{New considerations for second order equations}
Knowing the above results it is natural to ask whether a stochastic parabolicity condition is needed for the well-posedness in $L^p(\O;L^q)$ is dependent on $p$ and $q$ or not. The aim of this paper is to give an example of an SPDE, with which one can explain the behavior of the stochastic parabolicity condition with $p$ and $q$ as parameters.
In fact we consider problem \eqref{eq:intro} with
\[A = -\Delta  \ \ \text{and} \ \ B = \alpha D + \beta |D| \ \ \ \text{on the torus $\T= [0,2\pi]$}.\]
Here $|D| = (-\Delta)^{1/2}$ and $\alpha$ and $\beta$ are real constants. This gives the following SPDE.
\begin{equation}\label{eq:introeq}
\left\{
  \begin{array}{rllr}
    du(t) &= \Delta u(t,x) \, dt &+ \  2 \alpha D u(t,x) \, d W(t) \\ & & + \ 2\beta |D| u(t,x) \, dW(t), & t\in \R_+, x\in \T, \\
    u(0,x)&=u_0(x), & &  x\in \T.
  \end{array}
\right.
\end{equation}
The classical stochastic parabolicity condition for \eqref{eq:introeq} one gets from \eqref{eq:coer} is
\begin{equation}\label{eq:classical}
\tfrac12|2\alpha i+ 2\beta|^2 =2\alpha^2+2\beta^2<1.
\end{equation}
To explain our main result let $p,q\in (1, \infty)$. In Sections \ref{sec:LpL2} and \ref{sec:LpLq} we will show that
\begin{itemize}
\item problem \eqref{eq:introeq} is well-posed in $L^p(\O;L^q(\T))$ if
\begin{equation}\label{eq:introcond}
2\alpha^2 + 2\beta^2(p-1)  < 1.
\end{equation}
\item problem  \eqref{eq:introeq} is not well-posed in $L^p(\O;L^q(\T))$ if
\[2\alpha^2+ 2\beta^2(p-1) > 1.\]
\end{itemize}
The well-posedness in $L^p(\O;L^q(\T))$  means that a solution in the sense of distributions uniquely exists and defines an adapted element of $L^p((0,T)\times\O;L^q(\T))$ for each finite $T$. The precise  concept of a solution and other definitions can be found in Sections \ref{sec:LpL2} and \ref{sec:LpLq}.

Note that $2\alpha D$ generates a group on $L^q(\T)$, whereas $2\beta |D|$
does not. This seems to be the reason the condition becomes $p$-dependent
through the parameter $\beta$, whereas this does not occur for the parameter
$\alpha$. Let us briefly explain the technical reason for the $p$-dependent
condition. For details we refer to the proofs of the main results. The
condition \eqref{eq:introcond} holds if and only if the following conditions both hold
\begin{equation}\label{eq:condparab}
2\alpha^2 - 2\beta^2  < 1,
\end{equation}
and
\begin{equation}\label{eq:condint}
\E\exp\Big(\frac{\beta^2 p |W(1)|^2}{1+2\beta^2 - 2\alpha^2}\Big)<\infty.
\end{equation}
As it will be clear from the our proofs, condition \eqref{eq:condparab} can be interpreted as a
parabolicity condition, and \eqref{eq:condint} is an integrability condition for the solution of problem \eqref{eq:introeq}.
Therefore, from now on we refer to \eqref{eq:condparab}  and \eqref{eq:condint} as the conditions for the
well-posedness in $L^p(\O;L^q)$ of problem \eqref{eq:introeq}.

Note that by taking $p\in (1,\infty)$ close to $1$, one can take $\beta^2$ arbitrary large. Surprisingly enough, such cases are not covered by the classical theory with condition \eqref{eq:classical}.

\subsection{Additional remarks}
We believe that similar results hold for equations on $\R$ instead of $\T$. However, we prefer
to present the results for $\T$, because some arguments are slightly less technical in this
case. Our methods can also be used to study higher order equations. Here
similar phenomena occur. In fact, Krylov informed the authors that with $A=
\Delta^2$ and $B = -2 \beta \Delta$, there exist
$\beta\in \R$ which satisfy $2\beta^2<1$ such that the problem
\eqref{eq:intro} is not well-posed in $L^4(\O;L^4(\R))$ (personal communication).

Our point of view is that the ill-posedness occurs, because $-2\beta \Delta$ does not generate a group on $L^4(\R)$, and therefore, integrability issues occur. With a slight variation of our methods one can check that for the latter choice of $A$ and $B$ one has the well-posedness in $L^p(\O;L^q(\R))$ for all $p\in (1, \infty)$ which satisfy $2\beta^2(p-1)<1$ and all $q\in (1, \infty)$. In particular if $\beta\in \R$ is arbitrary, one can take $p\in (1, \infty)$ small enough to obtain the well-posedness in $L^p(\O;L^q(\R))$ for all $q\in (1, \infty)$. Moreover, if $\beta$ and $p>1$ are such that $2\beta^2(p-1)>1$, then one does not have the well-posedness in $L^p(\O;L^q(\R))$. More details on this example (for the torus) are given below in Example \ref{ex:fourthorder}.

We do not present general theory in this paper, but we believe our results provides a
guideline which new theory for equations such as
\eqref{eq:intro}, might be developed.

\subsection{Organization}
This paper is organized as follows.
\begin{itemize}
\item In Section \ref{sec:pre} some preliminaries on harmonic analysis on $\T$ are given.
\item In Section \ref{sec:HS} a $p$-dependent  well-posedness result in $L^p(\O;X)$ is proved for Hilbert spaces $X$.
\item In Section \ref{sec:LpL2} we consider the well-posedness of problem \eqref{eq:introeq} in $L^p(\O;L^2(\T))$.
\item In Section \ref{sec:LpLq} the well-posedness of problem \eqref{eq:introeq} is studied in $L^p(\O;L^q(\T))$.
\end{itemize}

{\em Acknowledgment} -- The authors would like to thank Jan van Neerven and the anonymous referees and associate editors for the careful reading and helpful comments.

\section{Preliminaries\label{sec:pre}}

\subsection{Fourier multipliers}
Recall the following spaces of generalized periodic functions, see \cite[Chapter 3]{SchmTr} for details.

Let $\T=[0,2\pi]$ where we identify the endpoints. Let $\mathscr{D}(\T)$ be the space of periodic infinitely
differentiable functions $f:\T\to \C$. On $\mathscr{D}(\T)$ one can define the
seminorms $\| \cdot \|_{s}$, $s\in \N$, by
\[\|f\|_{s} = \sup_{x\in \T} |D^s f(x)|, \qquad f\in \mathscr{D}(\T),\qquad s\in \N.\]
In this way $\mathscr{D}(\T)$ becomes a locally convex space.
Its dual space $\mathscr{D}'(\T)$ is called {\em the space of periodic
distributions}. A linear functional $g:\mathscr{D}(\T)\to \C$ belongs to $\mathscr{D}'(\T)$ if and only if
there is a $N\in \N$ and a $c>0$ such that
\[|\lb f, g\rb| \leq  c \sum_{0\leq s\leq N}\|f\|_{s}.\]

For $f\in \mathscr{D}'(\T)$, we let $\hat{f}(n)= \F(f)(n) = \lb f, e_n\rb$, $n\in \Z$, where $e_n(x) = e^{-inx}$, $x\in \T$.
If $f\in L^2(\T)$ this coincides with
\[\hat{f}(n) = \F(f)(n) = \frac{1}{2\pi}\int_{\T} f(x) e^{-i x n}\, dx, \ \ n\in \Z.\]
Let $\Pol(\T)\subseteq \D(\T)$ be the space of all trigonometric polynomials.
Recall that $\Pol(\T)$ is dense in $L^p(\T)$ for all $p\in [1, \infty)$, see
\cite[Proposition 3.1.10]{GraClass}.

For a bounded sequence $m:= (m_n)_{n\in \Z}$ of complex numbers define a mapping
$T_m:\Pol(\T)\to \Pol(\T)$ by
\[T_m f(x) = \sum_{n\in \Z} m_n \hat{f}(n) e^{i n x}.\]
Let $q\in [1, \infty]$. A bounded sequence $m$ is called {\em an $L^q$-multiplier}
if $T_m$ extends to a bounded linear operator on $L^q(\T)$ if $1\leq q<\infty$ and $C(\T)$ if $q=\infty$). The space of all $L^q$-multipliers is denoted by $\M$. Moreover, we define a norm on $\M$ by
\[\|m\|_{\M} = \|T_m\|_{\calL(L^q(\T))}.\]
For more details on multipliers on $\T$ we refer to \cite{EG77} and \cite{GraClass}.

The following facts will be needed.
\begin{facts}\label{fact:mult}
\
\begin{enumerate}[(i)]
\item For all $q\in [1, \infty]$, translations are isometric in $\M$, i.e. if $\ k\in \Z$, then
\[\|n\mapsto m_{n+k}\|_{\M} = \|n\mapsto m_{n}\|_{\M}.\]
\item $\M$ is a multiplicative algebra and for all $q\in [1, \infty]$:
\[\|m^{(1)} m^{(2)}\|_{\M}\leq \|m^{(1)}\|_{\M} \|m^{(2)}\|_{\M}.\]
\item For all $q\in (1, \infty)$, $\|\one_{[0,\infty)}\|_{\M}<\infty$.
\item For all $q\in [1, \infty]$, $k\in \Z$ and $m\in \M$, $\|\one_{\{k\}}m\|_{\M}\leq \|m\|_{\M}$.
\end{enumerate}
\end{facts}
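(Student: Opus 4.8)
The plan is to check the four assertions in turn; only (iii) goes beyond soft functional analysis, as it is a disguised form of M.~Riesz's theorem on the $L^q$-boundedness of the Riesz projection, while (i), (ii) and (iv) are bookkeeping on top of the density of $\Pol(\T)$ and a couple of elementary operator-norm estimates.

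For (i) I would bring in the modulation operators $M_k\colon f\mapsto e^{ikx}f$, which for each $k\in\Z$ are surjective isometries of $L^q(\T)$ (and of $C(\T)$ when $q=\infty$), preserve $\Pol(\T)$, and satisfy $\widehat{M_k f}(n)=\hat f(n-k)$. Computing on $\Pol(\T)$ one gets $T_m(M_k f)(x)=\sum_n m_n\hat f(n-k)e^{inx}$, hence
\[ M_{-k}\big(T_m M_k f\big)(x)=\sum_n m_n\hat f(n-k)e^{i(n-k)x}=\sum_l m_{l+k}\hat f(l)e^{ilx}=T_{(n\mapsto m_{n+k})}f(x). \]
Since conjugation by surjective isometries leaves operator norms unchanged, $\|n\mapsto m_{n+k}\|_{\M}=\|m\|_{\M}$; applying this with $k$ and with $-k$ gives the identity for every $k\in\Z$.

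For (ii) the key observation is that on $\Pol(\T)$ the inner multiplier acts frequency by frequency, so $T_{m^{(1)}}T_{m^{(2)}}=T_{m^{(1)}m^{(2)}}$ there; if $m^{(1)},m^{(2)}\in\M$ the left-hand side is a bounded operator of norm at most $\|m^{(1)}\|_{\M}\|m^{(2)}\|_{\M}$, and density of $\Pol(\T)$ (in $L^q(\T)$ for $q<\infty$, in $C(\T)$ for $q=\infty$) upgrades this to the statement. Then (iv) I would get from (ii), once I know $\one_{\{k\}}\in\M$ with $\|\one_{\{k\}}\|_{\M}\le1$: this is immediate since $T_{\one_{\{k\}}}f=\hat f(k)e^{ikx}$ is rank one and Hölder's inequality gives $|\hat f(k)|\,\|e^{ikx}\|_{L^q(\T)}=|\hat f(k)|\,(2\pi)^{1/q}\le\|f\|_{L^q(\T)}$, with the obvious reading when $q=\infty$; then (ii) yields $\|\one_{\{k\}}m\|_{\M}\le\|\one_{\{k\}}\|_{\M}\|m\|_{\M}\le\|m\|_{\M}$. (One may instead argue directly, using $|m_k|\le\|m\|_{\M}$, which follows from $T_m e^{ikx}=m_ke^{ikx}$, together with the same bound on $|\hat f(k)|$.)

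The one substantial point, and the step I expect to be the main obstacle, is (iii); it is also where the hypothesis $q\in(1,\infty)$ is genuinely used. I would split $\one_{[0,\infty)}(n)=\tfrac12\big(1+\operatorname{sgn}(n)\big)+\tfrac12\one_{\{0\}}(n)$ with $\operatorname{sgn}(0)=0$. Here the symbol $1$ is the identity, $\one_{\{0\}}$ gives the rank-one operator $f\mapsto\hat f(0)$ (bounded on $L^q(\T)$ for every $q$ by the Hölder estimate above), and $\operatorname{sgn}$ is, up to a scalar multiple, the symbol of the periodic conjugate-function operator (discrete Hilbert transform). By M.~Riesz's theorem (see, e.g., \cite{GraClass}, \cite{EG77}) this operator is bounded on $L^q(\T)$ precisely for $q\in(1,\infty)$, so $\operatorname{sgn}\in\M$ for such $q$; and since finite sums of $L^q$-multipliers are again $L^q$-multipliers (because $T_{m+m'}=T_m+T_{m'}$, whence $\|m+m'\|_{\M}\le\|m\|_{\M}+\|m'\|_{\M}$), we conclude $\|\one_{[0,\infty)}\|_{\M}<\infty$. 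The argument visibly collapses at $q=1$ and $q=\infty$, matching the unboundedness of the Riesz projection there, so the range in (iii) cannot be improved along these lines.
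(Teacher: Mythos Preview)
The paper does not supply a proof of these facts; they are recorded as standard background with an implicit appeal to the references \cite{EG77} and \cite{GraClass}. Your argument is correct and is exactly the standard route one would take: (i) by conjugating $T_m$ with the modulation isometries $M_k f=e^{ik\cdot}f$, (ii) by the identity $T_{m^{(1)}}T_{m^{(2)}}=T_{m^{(1)}m^{(2)}}$ on $\Pol(\T)$ and density, (iii) by decomposing $\one_{[0,\infty)}$ into a constant, a point mass at $0$, and the sign symbol, then invoking M.~Riesz's theorem on the periodic conjugate function, and (iv) by the contractivity of the rank-one projection $T_{\one_{\{k\}}}$ together with (ii). The H\"older computation for (iv) and the handling of the $q=\infty$ case on $C(\T)$ are both fine.
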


Recall the classical Marcinkiewicz multiplier theorem \cite{Marcink}, see also \cite[Theorem 8.2.1]{EG77}.
\begin{theorem}\label{thm:marcin}
Let $m=(m_n)_{n\in \Z}$ be a sequence of complex numbers and $K$ be a constant such that
\begin{enumerate}[(i)]
\item for all $n\in \Z$ one has $|m_n|\leq K$
\item for all $n\geq 1$ one has
\[\sum_{j=2^{n-1}}^{2^n-1} |m_{j+1} - m_j|\leq K, \ \ \ \text{and} \ \ \  \sum_{j=-2^{n}}^{-2^{n-1}} |m_{j+1} - m_j|\leq K.\]
\end{enumerate}
Then for every $q\in (1, \infty)$, $m\in \M$ and
\[\|m\|_{\M}\leq c_q K.\]
Here $c_q$ is a constant only depending on $q$.
\end{theorem}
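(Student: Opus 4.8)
The plan is to give the classical Littlewood--Paley proof of the Marcinkiewicz multiplier theorem; since this result is well known I would only record the main steps, referring to \cite{EG77} for the details. Fix $q\in(1,\infty)$. First I would note, combining Facts \ref{fact:mult}(i)--(iv) with the finiteness of $\|\one_{[0,\infty)}\|_{\M}$, that the indicator $\one_{J}$ of an arbitrary finite or half-infinite interval $J\subseteq\Z$ is an $L^q$-multiplier with $\|\one_{J}\|_{\M}\le 2\|\one_{[0,\infty)}\|_{\M}$, uniformly in $J$. Then I would introduce the dyadic blocks $I_j^{+}=\{2^{j-1},\dots,2^{j}-1\}$, $I_j^{-}=-I_j^{+}$ for $j\ge 1$ and $I_0=\{-1,0,1\}$, which partition $\Z$, together with the associated Fourier projections $\Delta_j=T_{\one_{I_j^{+}\cup I_j^{-}}}$; these are bounded on $L^q(\T)$ uniformly in $j$ by the previous observation.

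Next I would recall (and, if desired, reprove from Facts \ref{fact:mult}(iii) via a Khintchine/randomization argument for the upper bound and duality for the lower one) the Littlewood--Paley equivalence
\[\|f\|_{L^q(\T)}\eqsim_q\Bigl\|\Bigl(\sum_{j\ge 0}|\Delta_j f|^2\Bigr)^{1/2}\Bigr\|_{L^q(\T)},\qquad f\in\Pol(\T).\]
Since $T_m$ and every $\Delta_j$ are Fourier multipliers they commute, so $\Delta_j(T_m f)=T_m(\Delta_j f)$ has spectrum inside $I_j^{+}\cup I_j^{-}$; applying the equivalence to $T_m f$, the theorem reduces to the vector-valued estimate
\[\Bigl\|\Bigl(\sum_{j\ge 0}|T_m\Delta_j f|^2\Bigr)^{1/2}\Bigr\|_{L^q(\T)}\lesssim_q K\,\Bigl\|\Bigl(\sum_{j\ge 0}|\Delta_j f|^2\Bigr)^{1/2}\Bigr\|_{L^q(\T)}.\]
To exploit the hypotheses I would carry out Abel summation on each block: for $n\in I_j^{+}$,
\[m_n=m_{2^{j-1}}+\sum_{l=2^{j-1}}^{2^{j}-2}(m_{l+1}-m_l)\,\one_{[l+1,\infty)}(n),\]
and analogously on $I_j^{-}$. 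Conditions (i) and (ii) then say exactly that $m\,\one_{I_j^{+}}$ (and $m\,\one_{I_j^{-}}$) is a linear combination of interval indicators whose coefficients have total absolute value $\le 2K$. Hence $T_m\Delta_j$ is, with $\ell^1$-weight $\lesssim K$, a combination of the uniformly bounded interval-indicator multipliers; in particular $\|T_m\Delta_j\|_{\calL(L^q(\T))}\lesssim_q K$ uniformly in $j$.

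The remaining step is to upgrade these per-block bounds to the displayed vector-valued inequality. Since the $\Delta_j T_m$ have symbols supported in pairwise disjoint dyadic blocks and each is assembled, with uniformly controlled weights, from interval projections, I would deduce the vector-valued bound from the Hilbert-space-valued version of the boundedness of the one-sided (Riesz) projection on $L^q(\T)$ --- i.e.\ from the $\ell^2$-valued extension of Facts \ref{fact:mult}(iii), a standard instance of vector-valued Calder\'on--Zygmund theory --- together with one more application of the Littlewood--Paley equivalence, which yields a constant $c_q$ depending only on $q$. I expect this last step to be the main obstacle: the per-block estimates are routine, but the square-function bookkeeping must be arranged so that the final constant depends only on $q$ and the Marcinkiewicz constant $K$, and not on the infinitely many dyadic blocks. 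This is precisely what the \emph{uniform} --- and vector-valued --- bounds on interval and partial-sum projections supplied by Facts \ref{fact:mult}(i)--(iv) make possible.
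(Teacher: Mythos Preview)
The paper does not prove Theorem~\ref{thm:marcin} at all: it is stated as the classical Marcinkiewicz multiplier theorem with references to \cite{Marcink} and \cite[Theorem~8.2.1]{EG77}, and is then used as a black box throughout Sections~\ref{sec:LpL2} and~\ref{sec:LpLq}. So there is nothing in the paper to compare your argument against.

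Your outline is the standard Littlewood--Paley proof, essentially the one in \cite{EG77}, and it is correct. The one point worth flagging is your final step. You correctly identify that the per-block estimates $\|T_m\Delta_j\|_{\calL(L^q)}\lesssim_q K$ do \emph{not} by themselves yield the square-function bound, since the $\Delta_j f$ are not orthogonal in $L^q$ for $q\neq 2$; one genuinely needs the $\ell^2$-valued boundedness of the Riesz projection (equivalently, of the periodic Hilbert transform) on $L^q(\T;\ell^2)$. You invoke this as ``a standard instance of vector-valued Calder\'on--Zygmund theory'', which is fine, but if you want the argument to be self-contained from Facts~\ref{fact:mult} alone you would need to supply this step (e.g.\ via randomization/Khintchine plus the scalar bound, or by citing the Marcinkiewicz--Zygmund extension theorem). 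Since the paper itself simply cites the result, your level of detail is already more than what the paper provides.
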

In particular if $m:\R\to \C$ is a continuously differentiable function, and
\begin{equation}\label{eq:Kmarcin}
K = \max\Big\{\sup_{\xi\in \R} |m(\xi)|, \ \sup_{n\geq 1} \int_{2^{n-1}}^{2^n} |m'(\xi)| \, d\xi,  \ \sup_{n\geq 1} \int_{-2^{n}}^{-2^{n-1}} |m'(\xi)| \, d\xi \Big\}
\end{equation}
then the sequence $m=(m_n)_{n\in\Z}$, where   $m_n = m(n)$ for $n\in\Z$, satisfies the conditions of Theorem \ref{thm:marcin}.

\subsection{Function spaces and interpolation\label{sec:fspaces}}
For details on periodic Bessel potential spaces $H^{s,q}(\T)$ and Besov
spaces $B^{s}_{q,p}(\T)$ we refer to \cite[Section 3.5]{SchmTr}. We briefly
recall the definitions. For $q\in (1, \infty)$ and $s\in (-\infty, \infty)$,
let $H^{s,q}(\T)$ be the space of all $f\in \D'(\T)$ such that
\[\|f\|_{H^{s,q}(\T)} := \Big\|\sum_{k\in \Z} (1+|k|^{2})^{s/2} \hat{f}(k) e^{ikx}\Big\|_{L^q(\T)}<\infty.\]
Let $K_j = \{k\in \Z: 2^{j-1}\leq |k|<2^{j}\}$. For $p,q\in [1, \infty]$ and $s\in (-\infty,\infty)$, let
$B^{s}_{q,p}(\T)$ be the space of all $f\in \D'(\T)$ such that
\[\|f\|_{B^{s}_{q,p}(\T)} = \Big(\sum_{j\geq 0} \Big\|2^{s j} \sum_{k\in K_j} \hat{f}(k) e^{ikx}\Big\|_{L^q(\T)}^p\Big)^{1/p},\]
with the obvious modifications for $p=\infty$.
For all $q\in (1, \infty)$, $s_0\neq s_1$ and $\theta\in (0,1)$ one has the following identification of the real interpolation spaces of $H^{s,q}(\T)$, see \cite[Theorems 3.5.4 and 3.6.1.1]{SchmTr},
\begin{equation}\label{eq:interp}
(H^{s_0,q}(\T), H^{s_1,q}(\T))_{\theta,p} = B^{s}_{q,p}(\T), \ \ p\in [1,\infty), q\in (1, \infty),
\end{equation}
where $s = (1-\theta)s_0 + \theta s_1$.
Also recall that for all $q\in (1, \infty)$ one has the following continuous embeddings
\[B^{s}_{q,1}(\T) \subseteq H^{s,q}(\T)\subseteq B^{s}_{q,\infty}(\T),\]
and for all $s>r$ and $q,p\in [1, \infty]$ one has the following continuous embeddings
\[B^{s}_{q,p}\subseteq B^{s}_{q,\infty}(\T)\subseteq B^{r}_{q,1}(\T)\subseteq B^{r}_{q,p}(\T).\]

\medskip

Let $X$ be a Banach space. Assume the operator $-A$ is the a generator of an
analytic semigroup $S(t) = e^{-tA}$, $t\geq 0$, on $X$. Let us make the convention that for $\theta\in (0,1)$ and $p\in [1, \infty]$ the space $D_A(\theta,p)$ is given by all $x\in X$ for which
\begin{equation}\label{eq:realintDA}
\|x\|_{D_A(\theta,p)} := \|x\| + \Big(\int_0^1 \|t^{1-\theta} Ae^{-tA}
x\|_X^{p} \, \frac{d t}{t}\Big)^{1/p}
\end{equation}
is finite. Recall that $D_{A}(\theta,p)$ coincides with the real interpolation space
$(X,D(A))_{\theta,p}$, see \cite[Theorem 1.14.5]{Tr1}. Here one needs a modification if $p=\infty$.

Now let $X$ be a Hilbert space endowed with a scalar product $(\cdot, \cdot)$. Recall that if $A$ is a selfadjoint operator which satisfies $(A x,x) \geq 0$, then $-A$ generates a strongly continuous contractive analytic semigroup $(e^{-tA})_{t\geq 0}$, see \cite[II.3.27]{EN}. Moreover, one can define the fractional powers $A^{\frac12}$, see \cite[Section 4.1.1]{Luninterp}, and one has
\begin{equation}\label{eq:HSfract}
D_A(\tfrac12,2) = D(A^{\frac12}).
\end{equation}
This can be found in \cite[Section 1.18.10]{Tr1}, but for convenience we include a short proof. If there exists a number $w>0$ such that for all $t\geq 0$ one has $\|e^{-tA}\|\leq e^{-w t}$, then by \eqref{eq:realintDA} one obtains
\begin{align*}
\|x\|_{D_A(\theta,2)} 
 & = \|x\| + \Big(\int_0^1 (A e^{-2tA} y, y) \, dt\Big)^{1/2} = \|x\| + \Big(\|y\|^2 - \|e^{-A} y\|^2\Big)^{1/2}
\end{align*}
where $y = A^{1/2} x$. Since $\|e^{-A} y\|\leq \|e^{-w} y\|$, one has
\[ C_w  \|A^{1/2}x\| \leq \Big(\|y\|^2 - \|e^{-A} y\|^2\Big)^{1/2} \leq \|A^{1/2} x\|.\]
We conclude $D_A(\frac12,2) = D(A^{\frac12})$ with the additional assumption on the growth of $\|e^{tA}\|$. The general case follows from $D_{A}(\frac12,2) = D_{A+1}(\frac12,2) = D((A+1)^{\frac12}) = D(A^{\frac12})$, see \cite[Lemma 4.1.11]{Luninterp}.

Finally we recall that for a Banach space $X$ and a measure space $(S,\Sigma,\mu)$, $L^0(S; X)$ denotes the vector space of strongly measurable functions $f:S\to X$. Here we identify functions which are equal almost everywhere.

\section{Well-posedness in Hilbert spaces\label{sec:HS}}

\subsection{Solution concepts}
Let $(\Omega, \A, \P)$ be a probability space with a filtration $\mathbb{F}=(\F_t)_{t\geq
0}$. Let $W:\R_+\times\O\to \R$ be a standard $\mathbb{R}$-valued $\mathbb{F}$-Brownian
motion. Let $X$ be a separable Hilbert space. Consider the following abstract
stochastic evolution equation:
\begin{equation}\label{eq:SPDEFA}
\left\{
  \begin{array}{rll}
    dU(t) + A U(t) \, dt &=  2 B U(t) \, dW(t) , & t\in \R_+, \\
    U(0)&=u_0.
  \end{array}
\right.
\end{equation}
Here we assume the operator $-A$ is the a generator of an
analytic strongly continuous semigroup $S(t) = e^{-tA}$ on $X$, see \cite{EN} for details, $B:D(A)\to D(A^{1/2})$ is bounded and linear and $u_0:\O\to X$ is $\F_0$-measurable.

The following definitions are standard, see e.g. \cite{DPZ} or \cite{NVW11}.

\begin{definition}\label{def:strong}
Let $T\in (0,\infty)$. A process $U:[0,T]\times\O\to X$ is called {\em a strong solution of \eqref{eq:SPDEFA} on $[0,T]$} if and only if
\begin{enumerate}[(i)]
\item $U$ is strongly measurable and adapted.
\item one has that $U\in L^0(\O; L^1(0,T;D(A)))$ and $B(U) \in L^0(\O;L^2(0,T;X))$,
\item $\mathbb{P}$-almost surely, the following identity holds in $X$:
\[U(t) - u_0 = \int_0^t A U(s) \, ds + \int_0^t 2B U(s) \, dW(s), \ \ \ \ t\in [0,T].\]
\end{enumerate}
Let $t_0\in (0,\infty]$. A process $U:[0,t_0)\times\O\to X$ is called {\em a strong solution of \eqref{eq:SPDEFA} on $[0,t_0)$} if
for all $0<T<t_0$ it is a strong solution of \eqref{eq:SPDEFA} on $[0,T]$.
\end{definition}
From the definition it follows that if a process $U:[0,t_0)\times\O\to X$ is  a strong solution of \eqref{eq:SPDEFA} on $[0,t_0)$, then
\[U\in L^0(\O;C([0,T];X)),  \ \ T<t_0.\]

\begin{definition}\label{def:mild}
Let $T\in (0,\infty)$. A process $U:[0,T]\times\O\to X$ is called a {\em mild solution of \eqref{eq:SPDEFA} on $[0,T]$} if and only if
\begin{enumerate}[(i)]
\item $U$ is strongly measurable and adapted,
\item one has $B U\in L^0(\O; L^2(0,T;X))$,
\item for all $t\in [0,T]$, the following identity holds in $X$:
\[U(t) = e^{tA} u_0 + \int_0^t e^{(t-s)A}  2B U(s) \, dW(s), \ \ \ \text{almost surely}.\]
\end{enumerate}
Let $t_0\in (0,\infty]$. A process $U:[0,t_0)\times\O\to X$ is called {\em a mild solution of \eqref{eq:SPDEFA} on $[0,t_0)$} if
for all $0<T<t_0$ it is a mild solution of \eqref{eq:SPDEFA} on $[0,T]$.
\end{definition}

The following result is well-known, see \cite{DPZ}.
\begin{proposition}\label{prop:equiv}
Let $T\in (0,\infty)$. Assume $u_0 \in L^0(\O,\F_0;X)$. For a process $U:[0,T]\times \O\to X$ the following statements are equivalent:
\begin{enumerate}[$(1)$]
\item $U$ is a strong solution of \eqref{eq:SPDEFA} on $[0,T]$.
\item $U$ is a mild solution of \eqref{eq:SPDEFA} on $[0,T]$ and
\[U\in L^0(\O;C([0,T];X))\cap L^0(\O;L^1(0,T;D(A))).\]
\end{enumerate}
\end{proposition}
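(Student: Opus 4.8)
The plan is to prove the equivalence of the strong and mild solution concepts for the abstract equation \eqref{eq:SPDEFA} by a standard stochastic Fubini/variation-of-constants argument. The direction $(1)\Rightarrow(2)$ is the substantive one; the converse $(2)\Rightarrow(1)$ is essentially a differentiation of the mild formula.

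\textbf{Proof of $(1)\Rightarrow(2)$.} Assume $U$ is a strong solution on $[0,T]$. First I would record that by Definition \ref{def:strong} we already have $U\in L^0(\O;C([0,T];X))$ (stated right after that definition) and $U\in L^0(\O;L^1(0,T;D(A)))$ by assumption (ii), so the regularity claim in $(2)$ is immediate; only the mild identity needs proof. Fix $t\in[0,T]$ and apply the (deterministic, pathwise, but $\F$-adapted) function $s\mapsto e^{(t-s)A}$ to the identity in Definition \ref{def:strong}(iii). The cleanest route is to fix $x^*\in D(A^*)$ (or just note $e^{(t-\cdot)A}$ maps into $D(A)$ for the relevant arguments since $-A$ generates an analytic semigroup), and compute $\frac{d}{ds}\big(e^{(t-s)A} U(s)\big)$ in the weak sense via the It\^o formula applied to the process $s\mapsto e^{(t-s)A}U(s)$: here the deterministic part contributes $A e^{(t-s)A}U(s)\,ds$ from the time-dependence of the semigroup, which exactly cancels the $-e^{(t-s)A}AU(s)\,ds$ coming from the drift of $U$, while the martingale part contributes $e^{(t-s)A}\,2BU(s)\,dW(s)$. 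Integrating from $0$ to $t$ and using $e^{0\cdot A}=I$ and $e^{tA}U(0)=e^{tA}u_0$ yields
\[
U(t) = e^{tA}u_0 + \int_0^t e^{(t-s)A}\,2BU(s)\,dW(s)
\]
$\P$-almost surely, which is Definition \ref{def:mild}(iii). The measurability/adaptedness in \ref{def:mild}(i) and the integrability $BU\in L^0(\O;L^2(0,T;X))$ in \ref{def:mild}(ii) are inherited verbatim from Definition \ref{def:strong}(i)--(ii), using that $e^{(t-s)A}$ is a bounded operator uniformly in $s\in[0,t]$ so the stochastic integral is well defined.

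\textbf{Proof of $(2)\Rightarrow(1)$.} Conversely, suppose $U$ is a mild solution with $U\in L^0(\O;C([0,T];X))\cap L^0(\O;L^1(0,T;D(A)))$. One substitutes the mild formula into the right-hand side of the strong identity and uses the stochastic Fubini theorem to exchange the order of the time integral $\int_0^t\!\!\int_0^r$ and the stochastic integral $dW$. Writing $\int_0^t AU(r)\,dr$ using $AU(r)=Ae^{rA}u_0+\int_0^r Ae^{(r-s)A}2BU(s)\,dW(s)$, the deterministic term integrates to $e^{tA}u_0-u_0$ by the fundamental theorem of calculus for the semigroup, and after interchanging integrals the stochastic term becomes $\int_0^t\big(\int_s^t Ae^{(r-s)A}\,dr\big)2BU(s)\,dW(s)=\int_0^t\big(e^{(t-s)A}-I\big)2BU(s)\,dW(s)$. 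Adding $\int_0^t 2BU(s)\,dW(s)$ and comparing with $U(t)-e^{tA}u_0$ from the mild formula gives exactly Definition \ref{def:strong}(iii). The applicability of stochastic Fubini is guaranteed by the assumed integrability $U\in L^0(\O;L^1(0,T;D(A)))$ together with $BU\in L^0(\O;L^2(0,T;X))$; one typically localizes by a sequence of stopping times to reduce to the $L^2$-case where the stochastic Fubini theorem is classical.

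\textbf{Main obstacle.} The delicate point in both directions is justifying the interchange of (stochastic) integration with the application of the unbounded operator $A$ and with the time integral against the singular kernel $Ae^{(t-s)A}$, which is only integrable thanks to analyticity and the $L^1(0,T;D(A))$-hypothesis. Rigorously this requires a localization argument: choose stopping times $\tau_n\uparrow T$ such that the truncated processes lie in the appropriate $L^2$-spaces, prove the identity on $[0,\tau_n]$ using the classical stochastic Fubini theorem (as in \cite{DPZ}), and then let $n\to\infty$ using the continuity of $U$ and dominated convergence. Since this is the standard equivalence theorem, I would simply cite \cite{DPZ} for the routine measure-theoretic details and present only the formal computation outlined above.
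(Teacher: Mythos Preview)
Your proof sketch is correct and follows the standard variation-of-constants/stochastic Fubini argument; the paper itself does not prove this proposition but simply states it as well-known with a reference to \cite{DPZ}, which is exactly the source you cite for the routine details. So your approach is consistent with (indeed more detailed than) the paper's treatment.
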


\begin{definition}\label{def:Lp}
Let $p\in (1, \infty)$.
\begin{enumerate}[$(1)$]
\item Let $T\in (0,\infty)$. A process $U:[0,T]\times\O\to X$ is called {\em an $L^p(X)$-solution of
\eqref{eq:SPDEFA} on $[0,T]$} if it is a strong solution on $[0,T]$ and
$U\in L^p((0,T)\times\O;D(A))$.

\item Let $t_0\in (0,\infty)$. A process $U:[0,t_0)\times\O\to X$ is called {\em an $L^p(X)$-solution of
\eqref{eq:SPDEFA} on $[0,t_0)$} if for all $0<T<t_0$ it is an {\em an $L^p(X)$-solution of
\eqref{eq:SPDEFA} on $[0,T]$}.
\end{enumerate}
\end{definition}

To finish this section we give a definition of the well-posedness for \eqref{eq:SPDEFA}.

\begin{definition}
Let $p\in [0, \infty)$.
\begin{enumerate}[$(1)$]
\item Let $T\in (0,\infty)$. The problem \eqref{eq:SPDEFA} is called {\em well-posed in $L^p(\O;X)$ on $[0,T]$} if for each $u_0\in L^p(\O;D(A))$ which is $\F_0$-measurable, there exists a unique $L^p(X)$-solution of \eqref{eq:SPDEFA} on $[0,T]$.

\item Let $t_0\in (0,\infty]$. The problem \eqref{eq:SPDEFA} is called {\em well-posed in $L^p(\O;X)$ on $[0,t_0)$} if for each $u_0\in L^p(\O;D(A))$ which is $\F_0$-measurable and there exists a unique $L^p(X)$-solution of \eqref{eq:SPDEFA} on $[0,t_0)$. If $t_0=\infty$, we will also call the latter {\em well-posed in $L^p(\O;X)$}.
\end{enumerate}
\end{definition}

\subsection{Well-posedness results}
For the problem \eqref{eq:SPDEFA} we assume the following.
\begin{enumerate}
\item[\textbf{(S)}]\label{cond:S} The operator $C:D(C)\subset X\to X$ is skew-adjoint, i.e. $C^* = -C$, and that
\[A = C^*C, \ \ \text{and}  \ \ B =  \alpha C+\beta |C|, \ \ \text{for some $\alpha,\beta\in \R$}.\]
To avoid trivialities assume that $C$ is not the zero operator.
\end{enumerate}
Using the spectral theorem, see \cite[Theorem VIII.4, p. 260]{ReSi},  one can see that $|C| = A^{1/2}$ and $D(B) = D(|C|) = D(C)$.

Under the assumption \textbf{(S)}, the operator $-A$ is the generator of an analytic contraction
semigroup $S(t) = e^{-tA}$, $t\geq 0$, on $X$.
Moreover, $(e^{tC})_{t\in \R}$ is a unitary group. In this situation we can prove
the first $p$-dependent the well-posedness result.

\begin{theorem}\label{thm:wellposed}
Assume the above condition {\rm \textbf{(S)}}. Let $p\in[2,\infty)$.
If $\alpha,\beta\in\R$ from \eqref{eq:SPDEFA} satisfy
\begin{equation}\label{eq:condsigma}
2\alpha^2+2\beta^2(p-1) <1,
\end{equation}
then for every $u_0\in L^p(\O,\F_0;D_A(1-\frac1p,p))$, there exists a unique $L^p(X)$-solution $U$ of \eqref{eq:SPDEFA} on $[0,\infty)$. Moreover, for every $T<\infty$ there is a constant $C_T$ independent of $u_0$ such that
\begin{align}
\label{eq:Lptime}\|U\|_{L^p((0,T)\times\O;D(A))} &\leq C_T \|u_0\|_{L^p(\O;D_A(1-\frac1p,p))},
\\ \label{eq:Ctime} \|U\|_{L^p(\O;C([0,T];D_A(1-\frac1p,p)))} &\leq C_T \|u_0\|_{L^p(\O;D_A(1-\frac1p,p))}.
\end{align}
\end{theorem}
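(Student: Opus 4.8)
The plan is to diagonalise \eqref{eq:SPDEFA} by the spectral theorem, write the solution explicitly as a family of scalar stochastic exponentials, and deduce \eqref{eq:Lptime}--\eqref{eq:Ctime} from Gaussian moment estimates; the restriction \eqref{eq:condsigma} enters through a single exponential-integrability threshold. Since $C$ is skew-adjoint, $-iC$ is self-adjoint, so the spectral theorem \cite{ReSi} furnishes a $\sigma$-finite measure space $(\Sigma,\mu)$, a unitary $\mathcal{U}\colon X\to L^2(\Sigma,\mu)$ and a measurable $m\colon\Sigma\to\R$ for which $\mathcal{U}C\mathcal{U}^{-1}$, $\mathcal{U}A\mathcal{U}^{-1}$, $\mathcal{U}|C|\mathcal{U}^{-1}$ and $\mathcal{U}B\mathcal{U}^{-1}$ are the operators of multiplication by $im$, $m^2$, $|m|$ and $i\alpha m+\beta|m|$. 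Writing $u_{0,\xi}:=(\mathcal{U}u_0)(\xi)$ and $r:=|m(\xi)|$, for $\mu$-a.e.\ $\xi$ the equation \eqref{eq:SPDEFA} decouples into the scalar linear SDE $dU_\xi=-m(\xi)^2U_\xi\,dt+2(i\alpha m(\xi)+\beta|m(\xi)|)U_\xi\,dW$, $U_\xi(0)=u_{0,\xi}$, whose solution is $U_\xi(t)=u_{0,\xi}\Phi_\xi(t)$ with
\[
\Phi_\xi(t)=\exp\bigl(-4i\alpha\beta\,m(\xi)r\,t\bigr)\exp\bigl(2(i\alpha m(\xi)+\beta r)W(t)-\kappa r^2t\bigr),\qquad \kappa:=1+2\beta^2-2\alpha^2,
\]
so that $|\Phi_\xi(t)|=\exp(2\beta rW(t)-\kappa r^2t)$. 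One then defines the candidate solution $U(t):=\mathcal{U}^{-1}\bigl[(u_{0,\xi}\Phi_\xi(t))_{\xi\in\Sigma}\bigr]$. Rearranging \eqref{eq:condsigma} gives $2p\beta^2<\kappa$; in particular $\kappa>0$, i.e.\ \eqref{eq:condparab} holds.

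\emph{The a priori estimate.} The crucial point is a pointwise bound trading a little dissipation for an exponential weight: for $r\ge0$, $t>0$ and any $\eps\in(0,2)$, completing the square in $r$ yields
\[
2\beta rW(t)-\kappa r^2t\;\le\;\frac{2\beta^2W(t)^2}{\eps\kappa t}-\frac{(2-\eps)\kappa}{2}\,r^2t,
\]
hence, with $\nu:=(2-\eps)\kappa>0$,
\[
\|AU(t)\|_X^2=\int_\Sigma r^4|u_{0,\xi}|^2|\Phi_\xi(t)|^2\,d\mu\;\le\;\exp\!\Bigl(\frac{4\beta^2W(t)^2}{\eps\kappa t}\Bigr)\,\bigl\|Ae^{-\frac{\nu t}{2}A}u_0\bigr\|_X^2.
\]
Raising to the power $p/2$, taking expectations, and using that $W(t)$ is independent of $\F_0$ (hence of $u_0$) together with $\E e^{\lambda W(t)^2/t}=(1-2\lambda)^{-1/2}$ for $\lambda<\tfrac12$, one obtains
\[
\E\|AU(t)\|_X^p\;\le\;C_1\,\E\bigl\|Ae^{-\frac{\nu t}{2}A}u_0\bigr\|_X^p,\qquad C_1:=\Bigl(1-\frac{4p\beta^2}{\eps\kappa}\Bigr)^{-1/2},
\]
which is finite precisely when $\eps>4p\beta^2/\kappa$; since $\kappa>2p\beta^2$ by \eqref{eq:condsigma}, such an $\eps\in(0,2)$ exists, and this is the only place where \eqref{eq:condsigma} is used. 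Integrating in $t$ over $(0,T)$, substituting $s=\nu t/2$, and using the description \eqref{eq:realintDA} of $D_A(1-\tfrac1p,p)$ (with $\|Ae^{-sA}\|_{\calL(X)}\le(es)^{-1}$ to absorb the range $s\ge1$) gives $\E\int_0^T\|AU(t)\|_X^p\,dt\le C_T^p\,\|u_0\|_{L^p(\O;D_A(1-\tfrac1p,p))}^p$, which is \eqref{eq:Lptime}.

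\emph{Solution property, uniqueness and continuity.} Strong measurability and adaptedness of $U$ are clear from the construction, and the integrability requirements of Definition \ref{def:strong} follow from \eqref{eq:Lptime} and the boundedness of $B\colon D(A)\to D(A^{1/2})$. A stochastic Fubini argument \cite{DPZ} --- commuting the bounded operator $\mathcal{U}$ through the $X$-valued Bochner and It\^o integrals and then evaluating at $\mu$-a.e.\ $\xi$, which is legitimate by this integrability --- lets one pass freely between \eqref{eq:SPDEFA} and the family of scalar SDEs; it thus shows both that $U$ satisfies the identity in Definition \ref{def:strong} (each $U_\xi$ being, by construction, the solution of its scalar SDE) and, applied to the difference of two solutions with the same initial value, uniqueness (via uniqueness for scalar linear SDEs). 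Together with \eqref{eq:Lptime}, $U$ is therefore the unique $L^p(X)$-solution on $[0,\infty)$. (Alternatively, \eqref{eq:condsigma} and $p\ge2$ give $2\alpha^2+2\beta^2<1$, so the classical condition \eqref{eq:coer} holds and uniqueness also follows from the $L^2(X)$-theory of \cite{KR79,Par2,Rozov,DP82}, since every $L^p(X)$-solution is an $L^2(X)$-solution on $(0,T)\times\O$.) Finally, by Proposition \ref{prop:equiv}, $U$ is a mild solution, $U(t)=e^{-tA}u_0+\int_0^te^{-(t-s)A}2BU(s)\,dW(s)$: the first term is bounded in $C([0,T];D_A(1-\tfrac1p,p))$ by $C\|u_0\|_{D_A(1-\tfrac1p,p)}$ because $(e^{-tA})$ is uniformly bounded on this interpolation space, while $\|Bx\|_{D(A^{1/2})}\lesssim\|x\|_{D(A)}$ and \eqref{eq:Lptime} yield $2BU\in L^p((0,T)\times\O;D(A^{1/2}))$ with the right bound, so the stochastic maximal $L^p$-regularity available in Hilbert spaces \cite{NVW10,NVW11} (or a direct factorisation argument from the explicit formula) places the stochastic convolution in $L^p(\O;C([0,T];D_A(1-\tfrac1p,p)))$; summing the two terms gives \eqref{eq:Ctime}.

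\emph{The main obstacle.} The heart of the argument is the a priori estimate: everything hinges on the square-completion bound for $2\beta rW(t)-\kappa r^2t$ and on matching the exponential-integrability threshold $4p\beta^2<\eps\kappa$ (achievable with some $\eps<2$) exactly with \eqref{eq:condsigma}; thereafter the reasoning is routine. The main technical nuisance is the stochastic Fubini reassembly linking the diagonalised scalar SDEs to the genuine $X$-valued equation, and --- for \eqref{eq:Ctime} --- the appeal to stochastic maximal regularity in the Hilbert-space setting.
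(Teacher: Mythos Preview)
Your proof is correct and follows essentially the same approach as the paper: spectral diagonalisation via the skew-adjointness of $C$, explicit scalar stochastic exponentials, a square-completion estimate with a tunable small parameter, the Gaussian integrability threshold $\E e^{\lambda W(1)^2}<\infty\iff\lambda<\tfrac12$ yielding exactly \eqref{eq:condsigma}, and then the mild formulation together with the stochastic maximal regularity of \cite{NVW10} for \eqref{eq:Ctime}. The only cosmetic differences are that the paper parametrises the splitting by taking $r=1-\varepsilon$ inside $r+2\theta$ (with $\theta=\beta^2-\alpha^2$) whereas you split $\kappa r^2t$ into an $\varepsilon$-part and a $(2-\varepsilon)$-part, and the paper works with $\|(1+a)V(t)\|_{L^2(\mathcal{O})}$ rather than $\|AU(t)\|_X$; neither changes the substance.
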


\begin{remark}
The classical parabolicity condition for \eqref{eq:SPDEFA} is $\tfrac{1}{2}((2\alpha)^2+(2\beta)^2) = 2\alpha^2 + 2\beta^2 <1$.
This condition is recovered if one takes $p=2$ in \eqref{eq:condsigma}.
Recall from \eqref{eq:HSfract} that $D_A(\frac12,2) = D(A^{\frac12})$ for $p=2$.
Surprisingly, Theorem \ref{thm:wellposed} is optimal in the sense that for every $p\geq 2$ the
condition \eqref{eq:condsigma} cannot be improved in general. This will be proved
in Theorem \ref{thm:sharpex}. Note that if $\beta=0$, then the condition
\eqref{eq:condsigma} does not depend on $p$. This explains why in many papers
the $p$-dependence in the well-posedness of SPDEs in $L^p(\O;X)$ is not visible, see \cite{BNVW,
DPIT,Kry,Kry00}. Note that if $\beta=0$, then $B$ generates a group. This is the main structural assumption which seems to be
needed to obtain a $p$-independent theory.
\end{remark}

\begin{proof}[Proof of Theorem \ref{thm:wellposed}]
If necessary, we consider the complexification of $X$ below.
By the spectral theorem (applied to $-iC$), see \cite[Theorem VIII.4, p. 260]{ReSi},  there exists a $\sigma$-finite measure space $(\OO, \Sigma,\mu)$, a
measurable function $c:\OO\to \R$ and a unitary operator $Q:X\to L^2(\OO)$ such that $Q C Q^{-1} = i c$.
Define the measurable functions $a:\OO\to [0,\infty)$ and $b:\OO\to \C$ by $a = c^2$ and $b = \beta |c| + i \alpha c$.
In this case one has $Q e^{tC} Q^{-1} = e^{i t c}$,  $Q S(t)Q^{-1} = e^{- t a}$ and $Q B Q^{-1} = b$.
The domains of the multiplication operators are as usual, see \cite{EN}.

Formally, applying $Q$ on both sides of \eqref{eq:SPDEFA} and denoting $V = Q U$ yields the following family of stochastic equations for $V$:
\begin{equation}\label{eq:SPDEFourier}
\left\{
  \begin{array}{rll}
    d V(t) + a V (t) \, dt &= 2 b V(t) \, dW(t), & t\in \R_+, \\
    V(0)&=v_0,
  \end{array}
\right.
\end{equation}
where $v_0 = Q u_0$. It is well-known from the theory of SDE that for fixed
$\xi\in \OO$, \eqref{eq:SPDEFourier} has a unique solution $v_{\xi}:\R_+\times
\O\to \R$ given by
\[v_{\xi}(t) = e^{- t a(\xi) - 2t b^2(\xi)} e^{2b(\xi) W(t)} v_0(\xi).\]
Indeed, this follows from the (complex version of) It\^o's formula, see
\cite[Chapter 17]{Kal}. Clearly, $(t,\omega,\xi)\mapsto v_{\xi}(t,\omega)$ defines a jointly measurable mapping.
Let $V:\R_+\times\O\to L^0(\OO)$ be defined by
$V(t,\omega)(\xi) = v_{\xi}(t,\omega)$. We check below that actually
$V:\R_+\times\O\to L^2(\OO)$ and
\begin{align}
\label{eq:LptimeV}\|V\|_{L^p((0,T)\times\O;D(a))} &\leq C_T \|u_0\|_{L^p(\O;D_A(1-\frac1p,p))}.
\end{align}
Let us assume for the time being \eqref{eq:LptimeV} has been proved.
Then the adaptedness of process $V:[0,T]\times\O\to L^2(\mathcal{O})$ follows from its definition.
In particular, $a V, b V\in L^p((0,T)\times\O;L^2(\mathcal{O}))$ and since $p\geq 2$ we get $aV\in L^1(0,T;L^2(\mathcal{O}))$ a.s.\ and $b V\in L^2((0,T)\times\O;L^2(\mathcal{O}))$. Using the facts that for all $t\in [0,T]$ and $\P$-almost surely
\[\int_0^t a(\xi) v_{\xi}(s) \, ds = \Big(\int_0^t a V(s) \, ds\Big)(\xi), \ \ \ \text{for almost all $\xi\in \mathcal{O}$},\]
\[\int_0^t b(\xi) v_{\xi}(s) \, d W(s) = \Big(\int_0^t b V(s) \, d W(s)\Big)(\xi), \ \ \ \text{for almost all $\xi\in \mathcal{O}$},\]
one sees that $V$ is an $L^p(L^2(\mathcal{O}))$-solution of \eqref{eq:SPDEFourier}. These facts can be rigorously justified  by a standard approximation argument. Using the above facts one also sees that uniqueness of $V$ follows from the uniqueness of $v_{\xi}$ for each $\xi\in \mathcal{O}$.
Moreover, it follows that the process $U = Q^{-1}V$ is an $L^p(X)$-solution
of \eqref{eq:SPDEFA} and inequality \eqref{eq:Lptime} follows from inequality
\eqref{eq:LptimeV}. Moreover, $U$ is the
unique $L^p(X)$-solution of \eqref{eq:SPDEFA}, because any other $L^p(X)$-solution
$\tilde{U}$ of \eqref{eq:SPDEFA} would give an $L^p(L^2(\mathcal{O}))$-solution $\tilde{V} = Q
\tilde{U}$ of \eqref{eq:SPDEFourier} and by uniqueness of the solution of \eqref{eq:SPDEFourier} this yields $V = \tilde{V}$ and therefore, $U = \tilde{U}$.

Hence to finish the proof of the Theorem we have to prove inequalities \eqref{eq:LptimeV} and \eqref{eq:Ctime}.

{\em Step 1} - Proof of \eqref{eq:LptimeV}.

Fix $t\in \R_+$ and $\omega\in \O$. Then using $|e^{i x}| = 1$, one gets
\begin{align*}
\|(1+a)V(t,\omega)\|_{L^2(\OO)}^2 &=  \int_{\OO} (1+a)^2 |V(t,\omega)|^2 \, d\mu
\\ & = \int_{\OO} (1+a)^2 e^{- 2 (1+2\beta^2 -2 \alpha^2) ta } e^{4\beta |c| W(t,\omega)} |v_0|^2 \, d\mu.
\end{align*}
Put $\theta := \beta^2 - \alpha^2$. Let $\eps\in (0,1)$ be such that
$2\beta^2(p-1)+2\alpha^2<1-\eps$. Put $r = 1-\eps$. Then $r+ 2\theta> 2\beta^2 p$ and  one can
write
\begin{align*}
\|(1+a)V(t,\omega)\|_{L^2(\OO)}^2 & = \int_{\OO} (1+a)^2 e^{- 2 (r+2\theta)t a} e^{4\beta |c| W(t,\omega)} e^{-2\eps t a} |v_0|^2\, d\mu.
\end{align*}
Now using $c^2 =a$ one gets
\begin{align*}
- 2 (r+2\theta)t a + 4\beta |c| W(t,\omega) & = -2 (r+2 \theta)t  \Big[|c| - \frac{\beta W(t,\omega)}{(r+2\theta)t}\Big]^2 + \frac{2\beta^2 |W(t,\omega)|^2}{(r+2\theta)t}
\\ & = - f(t) [|c|-g(t,\omega)]^2 + 2h(t,\omega),
\end{align*}
where
\[f(t) = 2 (r+2\theta)t,  \ \ g(t,\omega) = \frac{\beta W(t,\omega)}{(r+2\theta)t},  \ \ h(t,\omega) = \frac{\beta^2 |W(t,\omega)|^2}{(r+2\theta)t}.\]
It follows that
\begin{equation}\label{eq:isometryU}
\begin{aligned}
\|(1+a)V(t,\omega)\|_{L^2(\OO)}^2 =  \int_{\OO}  e^{-f(t) (|c| - g(t,\omega))^2} e^{2 h(t,\omega)}   e^{-2\eps t a} (1+a)^2 |v_0|^2\, d\mu.
\end{aligned}
\end{equation}
Since $e^{-f(t) (|c| - g(t,\omega))^2}\leq 1$, this implies that
\begin{align*}
\|(1+a)V(t,\omega)\|_{L^2(\OO)}^2  \leq  e^{2 h(t,\omega)} \int_{\OO} (1+a)^2 e^{-2\eps t a} |v_0|^2\, d\mu.
\end{align*}
Using the independence of $v_0$ and $(W(t))_{t\geq 0}$ it follows that that
\begin{equation}\label{eq:pointwiseestV}
\begin{aligned}\E \|(1+a)V(t,\omega)\|_{L^2(\OO)}^p  & \leq  \E \Big(e^{p h(t,\omega)} \Big(\int_{\OO} (1+a)^2 e^{-2\eps t a} |v_0|^2\, d\mu\Big)^{p/2}
\\ &  = \E e^{p h(1,\omega)} \|(1+a) e^{-\eps t a} v_0\|_{L^p(\O;L^2(\OO))}^p,
\end{aligned}
\end{equation}
where we used $\E e^{p h(t)} = \E e^{p h(1)}$.
Integrating over the interval $[0,T]$, it follows from \eqref{eq:pointwiseestV} and \eqref{eq:realintDA} that there exists a constant $C$ is independent of $u_0$ such that
\begin{align*}
\Big(\int_0^T&  \E \|(1+a)V(t)\|_{L^2(\OO)}^p \, dt\Big)^{1/p} \\ & \leq \big(\E e^{p h(1,\omega)}\big)^{1/p} \Big(\int_0^T \|(1+a) e^{-\eps t a} v_0\|_{L^p(\O;L^2(\OO))}^p \, dt\Big)^{1/p}
\\ & =\big(\E [e^{p h(1)}]\big)^{1/p} \Big(\E \int_0^T \|(1+A) e^{-\eps t A} u_0\|_X^{p} \, dt\Big)^{1/p}
\\ & \leq C \big(\E [e^{p h(1)}]\big)^{1/p}  \|u_0\|_{L^p(\O;D_A(1-\frac1p,p))}.
\end{align*}
 One has $\E e^{p h(1)}<\infty$ if and only if $\frac{p
\beta^2}{(r+2\theta)}<\frac12$. The last inequality is satisfied by assumptions since it is equivalent to
$2\beta^2(p-1) + 2\alpha^2 <r=1-\eps$. It
follows that $V\in L^p((0,T)\times\O;D(a))$ for any $T\in (0,\infty)$, and
hence \eqref{eq:LptimeV} holds. From this we can conclude that $V$ is an
$L^p(L^2(\mathcal{O}))$-solution on $\R_+$.

{\em Step 2} - Proof of \eqref{eq:Ctime}. By Step 1 and the preparatory observation the process $U$ is a strong $L^p(X)$ solution of \eqref{eq:SPDEFA}.
By Proposition \ref{prop:equiv}, $U$ is a mild solution of \eqref{eq:SPDEFA} as well and hence
\[U(t) = e^{t A} v_0 + \int_0^t e^{(t-s)A} 2 B U(s) \, dW(s), \ \ \ t\in [0,T].\]
Since $u_0\in L^p(\O;D_{A}(1-\frac1p,p))$, it follows from the strong continuity of $e^{tA}$ on $D_{A}(1-\frac1p,p)$, see \cite[Proposition 2.2.8]{Lun}, that
\[\|t\mapsto e^{tA} u_0\|_{L^p(\O;C([0,T];D_{A}(1-\frac1p,p)))} \leq C \|u_0\|_{L^p(\O;D_{A}(1-\frac1p,p))}.\]
Since by \eqref{eq:LptimeV}, $B U\in L^p((0,T)\times\O;D(a^{1/2}))$, it follows with \cite[Theorem 1.2]{NVW10} that
\begin{align*}
\Big\|t\mapsto \int_0^t e^{(t-s)A} & 2B U(s) \, dW(s)\Big\|_{L^p(\O;C([0,T];D_{A}(1-\frac1p,p)))} \\ & \leq C_1 \|BV\|_{L^p((0,T)\times\O;D(A^{1/2}))} \leq C_2 \|u_0\|_{L^p(\O;D_{A}(1-\frac1p,p))}.
\end{align*}
Hence \eqref{eq:Ctime} holds, and this completes the proof. Note that the assumptions in \cite[Theorem 1.2]{NVW10} are satisfied since $A$ is positive and self-adjoint.
\end{proof}

\begin{remark}
If one considers $A = \Delta$ on $L^2(\T)$ or $L^2(\R)$, then for the unitary operator $Q$ in the above proof one can take the discrete or continuous Fourier transform.
\end{remark}

The above proof one has a surprising consequence. Namely, the proof of \eqref{eq:LptimeV} also holds if the number $p$ satisfies $1<p<2$. With some additional argument we can show that in this situation there exists a unique $L^p(X)$-solution $U$ of \eqref{eq:SPDEFA}. This also implies that we need less than the classical stochastic parabolicity condition one would get from \eqref{eq:coer}. Indeed, \eqref{eq:coer} gives $2\alpha^2 + 2\beta^2 <1$. For the well-posedness in $L^p(\O;X)$, we only require \eqref{eq:condsigma} which, if $1<p<2$,      is less restrictive than $2\alpha^2 + 2\beta^2<1$.
In particular, note that if $2\alpha^2<1$, and $\beta\in \R$ is arbitrary, then \eqref{eq:condsigma} holds if we take $p$ small enough.

\begin{theorem}\label{thm:wellposed2}
Let $p\in(1,\infty)$.
If the numbers $\alpha,\beta\in\R$ from \eqref{eq:SPDEFA} satisfy \eqref{eq:condsigma},
then for every $u_0\in L^p(\O,\F_0;D_A(1-\frac1p,p))$, there exists a unique $L^p(X)$-solution $U$ of \eqref{eq:SPDEFA} on $[0,\infty)$. Moreover, for every $T<\infty$ there is a constant $C_T$ independent of $u_0$ such that
\begin{align}
\label{eq:Lptimecor}\|U\|_{L^p((0,T)\times\O;D(A))} &\leq C_T \|u_0\|_{L^p(\O;D_A(1-\frac1p,p))}
\end{align}
\end{theorem}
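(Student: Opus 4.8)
The plan is to reuse the proof of Theorem \ref{thm:wellposed} almost word for word, after noticing that its Step 1 — the derivation of \eqref{eq:LptimeV} — never used $p\ge 2$, and then to add the one extra integrability argument needed to turn the explicit process $V$ (hence $U=Q^{-1}V$) into a genuine strong solution when $1<p<2$. Keeping the notation of that proof, I would diagonalise $C$ by the unitary $Q\colon X\to L^2(\OO)$, define $V$ componentwise by $V(t,\omega)(\xi)=e^{-ta(\xi)-2tb^2(\xi)}e^{2b(\xi)W(t,\omega)}v_0(\xi)$, and run Step 1 unchanged: the hypothesis $p\ge 2$ is used there only to guarantee $\E e^{ph(1)}<\infty$, i.e.\ $\tfrac{p\beta^2}{r+2\theta}<\tfrac12$, which is exactly \eqref{eq:condsigma} for a suitable $\eps\in(0,1)$. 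This gives $V\in L^p((0,T)\times\O;D(a))$ and, after applying $Q^{-1}$, the estimate \eqref{eq:Lptimecor}. Uniqueness carries over verbatim: any $L^p(X)$-solution $\widetilde U$ produces via $\widetilde V=Q\widetilde U$ an $L^p(L^2(\OO))$-solution of \eqref{eq:SPDEFourier}, and reading that equation componentwise (a standard approximation over the truncations $\{|a|\le k\}$ of $\OO$) forces $\widetilde v_\xi=v_\xi$ by uniqueness for scalar linear SDEs.

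It then remains to verify that $V$ meets Definition \ref{def:strong}. Adaptedness and $V\in L^0(\O;L^1(0,T;D(a)))$ are immediate from the above (for the latter, $L^p(0,T)\subseteq L^1(0,T)$), and the integral identity in (iii) will follow componentwise exactly as in the proof of Theorem \ref{thm:wellposed} once the previous items are known. The delicate point — and the place where $p\ge 2$ was used in Theorem \ref{thm:wellposed} to pass from $L^p$- to $L^2$-integrability in time — is the requirement $bV\in L^0(\O;L^2(0,T;L^2(\OO)))$, i.e.
\[\int_0^T\|a^{1/2}V(t,\omega)\|_{L^2(\OO)}^2\,dt<\infty\quad\text{for a.e.\ }\omega.\]
For $p\ge 2$ this follows from $bV\in L^p((0,T)\times\O;L^2(\OO))$ because the underlying measure is finite, but for $p<2$ the inclusion $L^p(0,T)\subseteq L^2(0,T)$ fails, and taking an expectation is no help: $\E\int_0^T\|a^{1/2}V(t)\|^2\,dt<\infty$ would require $\E e^{2h(1)}<\infty$, i.e.\ (essentially) the classical condition $2\alpha^2+2\beta^2<1$, which is strictly stronger than \eqref{eq:condsigma} when $p<2$. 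I expect this to be the main obstacle, and the way around it is a pathwise estimate.

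Concretely, starting from the identity underlying \eqref{eq:isometryU} and dropping the factor $e^{-f(t)(|c|-g(t,\omega))^2}\le 1$, one has for all $t>0$ and a.e.\ $\omega$
\[\|a^{1/2}V(t,\omega)\|_{L^2(\OO)}^2\le e^{2h(t,\omega)}\,\|A^{1/2}e^{-\eps t A}u_0(\omega)\|_X^2.\]
Two one-sided bounds finish the argument. First, the law of the iterated logarithm for $W$ at $0$ gives, for a.e.\ $\omega$, a small $\delta(\omega)>0$ with $|W(t,\omega)|^2\le 4\,t\log\log(1/t)$ for $0<t<\delta(\omega)$; hence $e^{2h(t,\omega)}\le(\log(1/t))^{N}$ on $(0,\delta(\omega))$ for a fixed exponent $N$ depending only on $\beta,\theta,\eps$. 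Second, since $p>1$ we may fix $\gamma\in(0,\min\{\tfrac12,\,1-\tfrac1p\})$; then $D_A(1-\tfrac1p,p)\embed D(A^{\gamma})$ — by the standard embedding between real interpolation spaces together with $D_A(\gamma,2)=D(A^{\gamma})$, proved just as in \eqref{eq:HSfract} — and therefore
\[\|A^{1/2}e^{-\eps t A}u_0(\omega)\|_X=\|A^{1/2-\gamma}e^{-\eps t A}(A^{\gamma}u_0(\omega))\|_X\le C_\gamma\,(\eps t)^{\gamma-1/2}\,\|A^{\gamma}u_0(\omega)\|_X.\]
Combining the two, on $(0,\delta(\omega))$ the integrand is dominated by $C_\omega(\log(1/t))^{N}t^{2\gamma-1}$, which is integrable because $2\gamma-1>-1$, while on $[\delta(\omega),T]$ it is locally bounded. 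This yields the required $\int_0^T\|a^{1/2}V(t,\omega)\|^2\,dt<\infty$ a.s., so $V$ is a strong solution of \eqref{eq:SPDEFourier}, $U=Q^{-1}V$ is the unique $L^p(X)$-solution, and \eqref{eq:Lptimecor} has already been shown. Note that the positivity $\gamma>0$, equivalently $p>1$, is exactly what controls the singularity of $\|A^{1/2}e^{-\eps t A}u_0\|_X$ as $t\downarrow 0$; this is where the hypothesis $p>1$ enters.
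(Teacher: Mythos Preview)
Your proof is correct, and the overall structure — reuse Step 1 of Theorem \ref{thm:wellposed} verbatim for \eqref{eq:Lptimecor}, then supply the missing stochastic integrability of $bV$ — is exactly what the paper does. But you take a genuinely different route for that missing piece.

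The paper does not estimate $\|bV(t,\omega)\|_{L^2(\OO)}$ pathwise at all. Instead it turns the problem around: from the scalar SDE \eqref{eq:SPDEFourier} one has, for every $\xi$,
\[
\int_0^T 2b(\xi)v_\xi(s)\,dW(s)=v_\xi(T)-v_\xi(0)+\int_0^T a(\xi)v_\xi(s)\,ds=:\eta_T(\xi),
\]
and the right-hand side lies in $L^p(\O;L^2(\OO))$ by the pointwise-in-$t$ bound \eqref{eq:pointwiseestV} together with \eqref{eq:LptimeV}. Lemma \ref{lem:suffstochint} then says that if the fibrewise stochastic integrals assemble into an $L^p(\O;L^2(\OO))$ random variable, the integrand is $L^p$-stochastically integrable, and the lower bound in the two-sided It\^o isometry \eqref{eq:itoisom} yields the quantitative estimate $\|2bV\|_{L^p(\O;L^2(0,T;L^2(\OO)))}\le c_{p,2}\|\eta_T\|_{L^p(\O;L^2(\OO))}$. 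So the paper gets $bV\in L^p(\O;L^2(0,T;L^2(\OO)))$, strictly stronger than your $L^0$ statement, and it does so by exploiting the equation itself and the abstract stochastic integration machinery of the appendix.

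Your argument, by contrast, is a direct pathwise bound: the law of the iterated logarithm at $0$ controls $e^{2h(t,\omega)}$ by a power of $\log(1/t)$, and the embedding $D_A(1-\tfrac1p,p)\embed D(A^\gamma)$ for $0<\gamma<1-\tfrac1p$ turns the analytic smoothing estimate into an integrable power of $t$. This is more elementary and entirely self-contained (no appeal to Lemma \ref{lem:suffstochint} or \eqref{eq:itoisom}), and it makes the role of the hypothesis $p>1$ visually explicit through the requirement $\gamma>0$. The price is that you only get $bV\in L^0(\O;L^2(0,T;L^2(\OO)))$ — which is, however, exactly what Definition \ref{def:strong} (ii) asks for, so nothing is lost for the theorem as stated.
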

We do not know whether \eqref{eq:Ctime} holds for $p\in (1, 2)$.
However, since $U$ is a strong solution one still has that $U\in L^p(\O;C([0,T];X))$.
\begin{proof}
The previous proof of \eqref{eq:LptimeV} still holds for $p\in (1,2)$, and hence if we again define $U = Q^{-1}V$, the estimate \eqref{eq:Lptimecor} holds as well.
To show that $U$ is an $L^p(X)$-solution, we need to check that it is a
strong solution. For this it suffices to show that $B U\in L^p(\O;
L^2(0,T;X))$. Since $\|b V\|_{L^2(\OO)} = \|B U\|_X$, it is equivalent to show
that $b V\in L^p(\O; L^2(0,T;L^2(\OO)))$, where we used the notation of the
proof of Theorem \ref{thm:wellposed}. Now after this has been shown, as in the
proof of Theorem \ref{thm:wellposed} one gets that $U$ is a strong solution of
\eqref{eq:SPDEFA}.

By \eqref{eq:pointwiseestV}, for all $t\in (0,T]$ one has $V(t)\in L^p(\O;D(a))$ and
\begin{equation}\label{eq:pointwiseagain}
\|(1+a)V(t)\|_{L^p(\O;L^2(\OO))}   \leq  C_t \|v_0\|_{L^p(\O;L^2(\OO))}.
\end{equation}
Applying \eqref{eq:SPDEFourier} for each $t\in (0,T]$ and $\xi\in \OO$ yields that
\begin{equation}\label{eq:solbeq}
\int_0^t 2b(\xi) v_{\xi}(s)\, d W(s) = v_{\xi}(t) - v_{\xi}(0)+ \int_0^t a(\xi) v_{\xi}(s)\, d s := \eta_t(\xi).
\end{equation}
We claim that $b V\in L^p(\O;L^2(0,T;L^2(\OO)))$, and for all $t\in [0,T]$,
\[\int_0^t 2b V(s)\, d W(s) = \eta_t,\]
where the stochastic integral is defined as an $L^2(\OO)$-valued random variable, see Appendix \ref{sec:stochintLq}.

To prove the claim note that $\eta_t\in L^p(\O;L^2(\OO))$ for each $t\in (0,T]$. Indeed, by \eqref{eq:pointwiseagain} and \eqref{eq:LptimeV}
\begin{align*}
\|\eta_t\|_{L^p(\O;L^2(\OO))} & \leq \|V(t)\|_{L^p(\O;L^2(\OO))}+ \|V(0)\|_{L^p(\O;L^2(\OO))} \\ & \qquad  \qquad  \qquad + \int_0^t \|a V(s)\|_{L^p(\O;L^2(\OO))} \, d s
\\ & \leq (C_t +1)\|v_0\|_{L^p(\O;L^2(\OO))} + t^{1-\frac{1}{p}} \|a
V\|_{L^p((0,T)\times\O;L^2(\OO))}
\\ & \leq C_{t,T} \|v_0\|_{L^p(\O;D_a(1-\frac1p,p))}<\infty.
\end{align*}
Therefore, by \eqref{eq:solbeq} and Lemma \ref{lem:suffstochint} (with $\phi = 2b V$ and $\psi = 2b v$), the claim
follows, and from \eqref{eq:itoisom} we obtain
\begin{align*}
\|2b V\|_{L^p(\O;L^2(0,T;L^2(\OO)))} &\leq c_{p,2} \|\eta_T\|_{L^p(\O;L^2(\OO))}
\leq c_{p,2}C_{T,T} \|v_0\|_{L^p(\O;D_a(1-\frac1p,p))}
\end{align*}
\end{proof}

An application of Theorems \ref{thm:wellposed} and \ref{thm:wellposed2} is given in Section \ref{sec:LpL2}, where it is also be shown that the condition \eqref{eq:condsigma} is sharp.

Next we present an application to a fourth order problem.
\begin{example}\label{ex:fourthorder}
Let $s\in \R$. Let $\beta\in \R$. Consider the following SPDE on $\T$.
\begin{equation}\label{eq:kryintro2}
\left\{
  \begin{array}{rll}
    du(t,x) + \Delta^2 u(t,x) \, dt &= -2 \beta \Delta u(t,x) \, dW(t), & t\in \R_+, x\in \T,
    \\ D^k u(t,0) &= D^k u(t,2\pi), &  t\in \R_+, k\in \{0,1, 2, 3\}
\\    u(0,x)&=u_0(x), & x\in \T.
  \end{array}
\right.
\end{equation}
Let $U:\R_+\times\O\to H^{s,2}(\T)$ be the function given by $U(t)(x) =
u(t,x)$. Then \eqref{eq:kryintro2} can be formulated as \eqref{eq:SPDEFA} with
$C = i \Delta$ and $X=H^{s,2}(\T)$. If we take $p\in (1, \infty)$, such that
$2\beta^2(p-1)<1$, then for all $u_0\in L^p(\O,\F_0;B^{s+4-\frac4p}_{2,p}(\T))$,
\eqref{eq:kryintro2} has an $L^p$-solution, and
\[\|U\|_{L^p((0,T)\times\O;H^{s+4,2}(\T))} \leq C_T \|u_0\|_{L^p(\O;B^{s+4-\frac4p}_{2,p}(\T))},\]
where $C_T$ is a constant independent of $u_0$.

It should be possible to prove existence, uniqueness and regularity for \eqref{eq:kryintro2} in the $L^p((0,T)\times\O;H^{s,q}(\T))$-setting with $q\in (1, \infty)$ under the same conditions on $p$ and $\beta$, but this is more technical. Details in the
$L^q$-case are presented for another equation in Section \ref{sec:LpLq}. Note
that with similar arguments one can also consider \eqref{eq:kryintro2} on
$\R$.
\end{example}

\begin{remark}\
The argument in Step 1 of the proof of Theorem \ref{thm:wellposed} also makes sense if the number $p$ satisfies $0<p\leq 1$. However, one needs further study to see whether $b V$  or $B U$ are stochastically integrable in this case. The definitions of $D_a(1-\frac1p,p)$ and $D_A(1-\frac1p,p)$ could be extended by just allowing $p\in(0,1)$ in \eqref{eq:realintDA}. It is interesting to see that if $p\downarrow 0$, the condition \eqref{eq:condsigma} becomes
    $2\alpha^2-2\beta^2 <1$.
\end{remark}

\section{Sharpness of the condition in the $L^p(L^2)$-setting\label{sec:LpL2}}

Below we consider the case when  the operator $A$ from  Theorem \ref{thm:wellposed} and \eqref{eq:SPDEFA} is the periodic Laplacian, i.e. the Laplacian with periodic boundary conditions. We will show below that in this case condition \eqref{eq:condsigma} is optimal.
Consider the following SPDE on the torus $\T = [0,2\pi]$.
\begin{equation}\label{eq:SPDE}
\left\{
  \begin{array}{rlll}
    du(t) &= \Delta u(t,x) \, dt &  + \ 2 \alpha D u(t,x) \, dW(t) \\ & &  + \ 2 \beta |D| u(t,x) \, dW(t), & t\in \R_+, x\in \T, \\
    D^k u(t,0) &= D^k u(t,2\pi), & & t\in \R_+, \  k\in \{0,1\} \\
    u(0,x)&=u_0(x), &  & x\in \T.
  \end{array}
\right.
\end{equation}
Here $D$ denotes the derivative with respect to $x$, $|D| = (-\Delta)^{1/2}$, the initial value $u_0:\O\to \mathcal{D}'(\T)$ is $\F_0$-measurable and $\alpha,\beta\in \R$ are constants not both equal to zero.

Let $X=H^{s,2}(\T)$ and $s\in \R$.
Then  problem \eqref{eq:SPDE} in the functional analytic formulation becomes
\begin{equation}\label{eq:SPDEFAappl}
\left\{
  \begin{array}{rll}
    dU(t) + A U(t) \, dt &=  2B U(t) \, dW(t) , & t\in \R_+,  \\
    U(0)&=u_0.
  \end{array}
\right.
\end{equation}
Here $A = -\Delta$ with domain $D(A) = H^{s+2,2}(\T)$ and $B:H^{s+2,2}(\T)\to
H^{s+1,2}(\T)$ is given by $B = \alpha D  + \beta |D|$ with $D(B) = D(D) =
H^{s+1,2}(\T)$. The connection between $u$ and $U$ is given by $u(t,\omega,x)
= U(t,\omega)(x)$. A process $u$ is called {\em an
$L^p(H^{s,2})$-solution to \eqref{eq:SPDE} on $[0,\tau)$} if $U$ is an
$L^p(H^{s,2})$-solution of \eqref{eq:SPDEFAappl} on $[0,\tau)$.

\begin{theorem}\label{thm:sharpex}
Let $p\in(1,\infty)$ and let $s\in\R$.
\begin{enumerate}[(i)]
\item\label{a:i} If $2\alpha^2+2\beta^2(p-1)<1$, then for every $u_0\in L^p(\O,\F_0;B^{s+2-\frac{2}{p}}_{2,p}(\T))$ there exists a unique $L^p(H^{s,2})$-solution $U$ of \eqref{eq:SPDEFAappl} on $[0,\infty)$. Moreover, for every $T<\infty$ there is a constant $C_T$ independent of $u_0$ such that
\begin{align}
\label{eq:Lptime-ex} \|U\|_{L^p((0,T)\times\O;H^{s+2,2}(\T))} &\leq C_T \|u_0\|_{L^p(\O;B^{s+2-\frac{2}{p}}_{2,p}(\T))}.
\end{align}
If, additionally, $p\in [2, \infty)$, then for every $T<\infty$ there is a constant $C_T$ independent of $u_0$ such that
\begin{equation}
\label{eq:Ctime-ex} \|U\|_{L^p(\O;C([0,T];B^{s+2-\frac{2}{p},2}_{2,p}(\T)))} \leq C_T \|u_0\|_{L^p(\O;B^{s+2-\frac{2}{p}}_{2,p}(\T))}.
\end{equation}
\item\label{a:ii} If $2\alpha^2+ 2\beta^2(p-1)>1$, and
\begin{equation}\label{eq:u0emacht}
u_0(x) = \sum_{n\in \Z\setminus\{0\}} e^{-n^{2}} e^{inx}, \  \  x\in \T,
\end{equation}
then there exists a unique $L^p(H^{s,2})$-solution of \eqref{eq:SPDEFAappl} on $[0,\tau)$, where $\tau = \big(2\alpha^2+2\beta^2(p-1)-1\big)^{-1}$. Moreover, $u_0\in \bigcap_{\gamma\in \R} B^{\gamma}_{2,p}(\T) = C^\infty(\T)$ and
\begin{align}
\label{eq:negativeL2} \limsup_{t\uparrow \tau} \|U(t)\|_{L^p(\O;H^{s,2}(\T))} & = \infty.
\end{align}
If, additionally, $p\in [2, \infty)$, then also
\begin{equation}
\label{eq:firstnegativeL2} \|U\|_{L^p((0,\tau)\times\O;H^{s+2,2}(\T))}  = \infty
\end{equation}

\end{enumerate}
\end{theorem}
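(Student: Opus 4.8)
The plan is to read off part~(i) from the abstract theory of Section~\ref{sec:HS} and to prove part~(ii) by diagonalising in Fourier modes. For part~(i) I would take $X=H^{s,2}(\T)$ and $C=D=\partial_x$; on $X$ the operator $D$ is skew-adjoint, $C^*C=-\Delta=A$ and $|C|=A^{1/2}=|D|$, so assumption~\textbf{(S)} holds with $B=\a D+\b|D|$, and $D(A)=H^{s+2,2}(\T)$. The one fact to record is that, by the interpolation identity~\eqref{eq:interp} with $\theta=1-\tfrac1p$, $D_A(1-\tfrac1p,p)=(H^{s,2}(\T),H^{s+2,2}(\T))_{1-\frac1p,p}=B^{s+2-\frac2p}_{2,p}(\T)$. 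Then \eqref{eq:Lptime-ex} is Theorem~\ref{thm:wellposed2}, and \eqref{eq:Ctime-ex} follows from \eqref{eq:Ctime} in Theorem~\ref{thm:wellposed} when $p\in[2,\infty)$.

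For part~(ii), writing $U(t)=\sum_{n\in\Z}v_n(t)e^{inx}$ decouples the equation into the scalar SDEs $dv_n+n^2v_n\,dt=2b(n)v_n\,dW$ with $b(n)=\b|n|+i\a n$ and $v_n(0)=\wh{u_0}(n)=e^{-n^2}$ (so $v_0\equiv0$), whence by It\^o's formula $v_n(t)=e^{-n^2}\exp\big(-(n^2+2b(n)^2)t+2b(n)W(t)\big)$ and $|v_n(t)|^2=e^{-2n^2(1+\gamma t)}e^{4\b|n|W(t)}$, where $\gamma=1+2\b^2-2\a^2$. Completing the square in $|n|$ I would rewrite this as $|v_n(t)|^2=e^{2\lambda(t)W(t)^2}e^{-2(1+\gamma t)(|n|-\mu(t))^2}$ with $\lambda(t)=\b^2/(1+\gamma t)$ and $\mu(t)=\b W(t)/(1+\gamma t)$; note that $1+\gamma\tau=2p\b^2\tau>0$, so $1+\gamma t$ stays positive and bounded on $[0,T]$ for every $T<\tau$.

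To get existence and uniqueness of the solution on $[0,\tau)$ I would use the Gaussian factor $e^{-2(1+\gamma t)(|n|-\mu(t))^2}$ to bound $\|U(t)\|_{H^{s+2,2}(\T)}^2\le C_t\,e^{2\lambda(t)W(t)^2}(1+W(t)^2)^{s+2}$ with $C_t$ locally bounded in $t$ — here the super-polynomial decay of $\wh{u_0}$ is exactly what forces the $n$-series to converge even when $\gamma\le0$. Since $\E\,e^{\theta W(t)^2}<\infty$ precisely when $\theta<\tfrac1{2t}$, and $p\lambda(t)<\tfrac1{2t}$ is equivalent to $2\a^2+2\b^2(p-1)<\tfrac1t$, i.e.\ to $t<\tau$, the map $t\mapsto\E\|U(t)\|_{H^{s+2,2}(\T)}^p$ is finite and continuous on $[0,T]$, so $U\in L^p((0,T)\times\O;H^{s+2,2}(\T))$ for every $T<\tau$; combined with the routine verification (as in the proof of Theorem~\ref{thm:wellposed}) that $U$ is a strong solution, this yields the $L^p(H^{s,2})$-solution on $[0,\tau)$, unique by uniqueness of each $v_n$. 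That $u_0\in C^\infty(\T)$ is immediate from the decay of $\wh{u_0}$.

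The crux — and the step I expect to cause the most trouble — is the blow-up. Starting from $\|U(t)\|_{L^p(\O;H^{s,2}(\T))}^p=\E\big[e^{p\lambda(t)W(t)^2}\big(\sum_n(1+n^2)^se^{-2(1+\gamma t)(|n|-\mu(t))^2}\big)^{p/2}\big]$, I would show that the inner sum is, uniformly for $t$ near $\tau$, comparable to $(1+\mu(t)^2)^s\asymp(1+W(t)^2)^s$ — the terms with $|n|$ closest to $|\mu(t)|$ dominate, the remaining ones being exponentially smaller in $\mu(t)^2$ — so that $\|U(t)\|_{L^p(\O;H^{s,2})}^p\asymp\E\big[e^{p\lambda(t)W(t)^2}(1+W(t)^2)^{sp/2}\big]$. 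Because $p\lambda(t)\uparrow\tfrac1{2t}$ exactly as $t\uparrow\tau$, this Gaussian integral diverges as $t\uparrow\tau$, which gives \eqref{eq:negativeL2}: for $s\ge0$ this is immediate since $(1+W(t)^2)^{sp/2}\ge1$, while for $s<0$ one must split the integral near $W(t)=0$ and estimate the tail carefully to see that the divergence survives the negative polynomial weight in the relevant range of $s$ — this is the delicate point. Finally, for \eqref{eq:firstnegativeL2} (valid when $p\ge2$) the same computation with $s$ replaced by $s+2$, together with $\E\,e^{\theta W(t)^2}=(1-2\theta t)^{-1/2}$, gives $\E\|U(t)\|_{H^{s+2,2}(\T)}^p\asymp(\tau-t)^{-((s+2)p+1)/2}$ as $t\uparrow\tau$, whose integral over $(0,\tau)$ diverges, so $\|U\|_{L^p((0,\tau)\times\O;H^{s+2,2})}=\infty$.
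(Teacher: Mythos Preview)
Your approach to part~(i), to the existence half of~(ii), and to the blow-up \eqref{eq:negativeL2} matches the paper's: diagonalise in Fourier, solve the scalar SDEs explicitly, complete the square, and reduce the $H^{s+2,2}$-norm to a Gaussian integral. The paper obtains the lower bound for \eqref{eq:negativeL2} by keeping only the single mode $n=m$ nearest to $\tilde g(t,\omega)$ (your $\mu(t)$) and invoking monotone convergence as $t\uparrow\tau$, which is precisely your ``terms with $|n|$ closest to $|\mu(t)|$ dominate'' idea.

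Your argument for \eqref{eq:firstnegativeL2}, however, is genuinely different from the paper's and has a gap. Your asymptotic $\E\|U(t)\|_{H^{s+2,2}}^p\asymp(\tau-t)^{-((s+2)p+1)/2}$ is correct (for $(s+2)p>-1$), but the integral $\int_0^\tau(\tau-t)^{-((s+2)p+1)/2}\,dt$ diverges only when $(s+2)p\ge1$, i.e.\ $s\ge-2+\tfrac1p$; for smaller $s$ the direct rate computation does not give \eqref{eq:firstnegativeL2}. Note also that your argument makes no use of the hypothesis $p\ge2$, which is a warning sign that something is missing.

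The paper instead proves \eqref{eq:firstnegativeL2} \emph{indirectly}: suppose $U\in L^p((0,\tau)\times\O;H^{s+2,2}(\T))$. Then $BU\in L^p((0,\tau)\times\O;H^{s+1,2}(\T))$, and the stochastic maximal regularity result of \cite{NVW10}---this is exactly where $p\ge2$ enters---applied to the mild formulation yields $U\in L^p(\O;C([0,\tau];B^{s+2-2/p}_{2,p}(\T)))\hookrightarrow L^p(\O;C([0,\tau];H^{\sigma,2}(\T)))$ for $\sigma<s+2-\tfrac2p$, contradicting the already-established pointwise blow-up. This indirect route both explains the role of the restriction $p\ge2$ and sidesteps the range limitation in your rate computation.
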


\begin{remark}
Setting
\[u_0(x) = \sum_{n\in \Z\setminus\{0\}} e^{-\delta n^{2}} e^{inx},\]
where $\delta>0$ is a parameter, one can check that the assertion in \eqref{a:ii} holds if one takes
\[\tau = \delta/(2\alpha^2+2(p-1)\beta^2-1).\]
This shows how the {\em nonrandom explosion time} varies for some class of initial conditions.
\end{remark}

\begin{proof}
\eqref{a:i}: This follows from Theorems \ref{thm:wellposed} and \ref{thm:wellposed2}, \eqref{eq:interp} and the text below \eqref{eq:realintDA}.

\eqref{a:ii}: Taking the Fourier transforms on $\T$ in \eqref{eq:SPDEFAappl} one obtains the following family of scalar-valued SDEs with $n\in \Z$:
\begin{equation}\label{eq:SPDEFourier2}
\left\{
  \begin{array}{rll}
    d v_n(t) &= -n^{2} v_n (t) \, dt + (2 i\alpha n + 2\beta |n|) v_n(t) \, dW(t), & t\in \R_+, \\
    v_n(0)&=a_n,
  \end{array}
\right.
\end{equation}
where $v_n(t) = \F(U(t))(n)$ and $a_n = e^{-n^2}$.
Fix $n\in \Z$. It is well-known from the theory of SDEs that \eqref{eq:SPDEFourier2} has a unique solution $v_n:\R_+\times \O\to \R$ given by
\begin{equation}\label{eq:defvn}
v_n(t) = e^{- t(n^2 + 2b^2_n)} e^{2\beta |n| W(t)} e^{2\alpha i n W(t)} a_n,
\end{equation}
where $b_n = \beta |n|+i \alpha n$. Now let $U:\R_+\times\O\to L^0(\T)$ be
defined by
\begin{align}\label{def:Usol2}
U(t,\omega)(x) = \sum_{n\in \Z} v_n(t,\omega) e^{i n x}.
\end{align}
Clearly, if an $L^p(H^{s,2})$-solution exists, it has to be of
the form \eqref{def:Usol2}. Hence uniqueness is obvious.

Let $T<\tau$ and let $t\in [0,T]$. As in \eqref{eq:isometryU} in the proof of Theorem \ref{thm:wellposed} (with $\eps=0$), one has
\begin{equation}\label{def:UH2}
\begin{aligned}
\|U(t,\omega)\|_{H^{s+2,2}(\T)}^2
= e^{2 h(t,\omega)} \sum_{n\in \Z\setminus\{0\}}(n^2+1)^{s+2} e^{-f(t) (|n| - g(t,\omega))^2} e^{-2n^{2}}
\\
= 2 e^{2 \tilde{h}(t,\omega)}\sum_{n\geq 1} (n^2+1)^{s+2} e^{-\tilde{f}(t) (n - \tilde{g}(t,\omega))^2},
\end{aligned}
\end{equation}
where in the last step we used the symmetry in $n$ and where for the term $\widehat{u}_0(n)= e^{-2n^{2}}$ we have introduced the following functions $\tilde{f}$, $\tilde{g}$ and $\tilde{h}$:
\[\tilde{f}(t) = 2 (t+2\theta t + 1) ,  \ \ \tilde{g}(t,\omega) = \frac{\beta W(t,\omega)}{t+2\theta t + 1},  \ \ \tilde{h}(t,\omega) = \frac{\beta^2 |W(t,\omega)|^2}{t+2\theta t +1},\]
where $\theta = \beta^2-\alpha^2$.
Note that for $t<\tau$  we have $t+2\theta t + 1\geq \gamma$, where
\[\gamma= \left\{
            \begin{array}{ll}
              1 & \hbox{if $2\alpha^2 - 2\beta^2\leq 1$,} \\
              T+2\theta T + 1 & \hbox{if $2\alpha^2 - 2\beta^2>1$.}
            \end{array}
          \right.
\]
The proof will be split in two parts. We prove the existence and regularity in
\eqref{a:ii} for all $s\geq -2$ and $t<\tau$. The blow-up of \eqref{a:ii}
will be proved for all $s<-2$. Since $H^{s,2}(\T)\hookrightarrow
H^{r,2}(\T)$ if $s>r$, this is sufficient.

Assume first that $s\geq -2$. Let $W(t,\omega)\geq 0$ and let $m\in \N$ be the unique integer such that
$m-1<\tilde{g}(t,\omega)\leq m$. Then one has
\begin{align*}
\sum_{n\geq 1} (n^2+1)^{s+2} e^{-\tilde{f}(t) (n -
\tilde{g}(t,\omega))^2} & \leq \sum_{n\geq 1} (n^2+1)^{s+2} e^{-\gamma (n - m)^2}
\\ & = \sum_{k\geq -m+1} ((k+m)^2+1)^{s+2} e^{-\gamma k^2}
\\    & \leq \sum_{k\geq -m+1} ((k+\tilde{g}(t,\omega)+1)^2+1)^{s+2} e^{-\gamma k^2}
\\    & \leq \sum_{k\in \Z} ((|k|+\tilde{g}(t,\omega)+1)^2+1)^{s+2} e^{-\gamma k^2}
\\ & \leq C_{s} \tilde{g}(t,\omega)^{2s+4} \sum_{k\in \Z} e^{-\gamma k^2} + C_{s}\sum_{k\in \Z} (|k|+1)^{2s+4} e^{-\gamma k^2}
\\ &  = C'_{s,\gamma}(\tilde{g}(t,\omega)^{2s+4} +1).
\end{align*}
Similarly, if $W(t,\omega)< 0$ one has
\[\sum_{n\geq 1} (n^2+1)^{s+2} e^{-\tilde{f}(t) (n - \tilde{g}(t,\omega))^2} \leq \sum_{n\geq 1} (n^2+1)^{s+2} e^{-\gamma k^2 }
\]

Hence by \eqref{def:UH2} and the previous estimate we infer that
\begin{align}\label{eq:estUgh}
\E \|U(t)\|_{H^{s+2,2}(\T)}^{p} \leq
C'_{s,\gamma}\E\big[ (|\tilde{g}(t)|^{2s+4} +1) e^{p\tilde{h}(t)}\big].
\end{align}
By the definition of the function $\tilde{h}$, the the RHS of \eqref{eq:estUgh} is finite if and only if  $\frac{p \beta^2 t}{t+2\theta
t+1}<\frac12$. This is equivalent with $\big[2(p-1)\beta^2+ 2 \alpha^2
-1\big]t<1$. Since by assumption $2(p-1)\beta^2+2\alpha^2-1>0$, the latter is
satisfied, because $t< \tau$.

Finally, we claim that $U\in L^p((0,T)\times\O;H^{s+2,2}(\T))$. Indeed, for all $0<t\leq T$ one has
\begin{align*}
\E\big[ (|\tilde{g}(t)^{2s+4}| +1) e^{p\tilde{h}(t)} \big]& = \E \Big[\Big(\Big[\frac{\beta |W(1)|}{\sqrt{t}+2\theta \sqrt{t} + t^{-1/2}}\Big]^{2s+4} +1\Big) e^{p\frac{\beta^2 |W(1)|^2}{1+2\theta +t^{-1}}} \Big]
\\ & \leq \E \Big[\Big(\Big[\frac{\beta |W(1)|}{\gamma t^{-1/2} }\Big]^{2s+4} +1\Big) e^{p\frac{\beta^2 |W(1)|^2}{1+2\theta +T^{-1}}}\Big]
\\ & \leq \E \Big[\Big(T^{s+2} (\beta/\gamma)^{2s+4} |W(1)|^{2s+4} +1\Big) e^{p\frac{\beta^2 |W(1)|^2}{1+2\theta +T^{-1}}}\Big] =(*)
\end{align*}
Since $(*)$ is independent of $t$ and finite by the assumption on $T$, the claim follows.
Now the fact that $U$ is a strong solution on $[0,T]$ can be checked as in Theorems \ref{thm:wellposed} and \ref{thm:wellposed2}.

We will show that for all $s<-2$ one has
\begin{equation}\label{eq:supinfinite}
\limsup_{t\uparrow \tau} \E\|U(t)\|_{H^{s+2,2}(\T)}^p = \infty.
\end{equation}
As observed earlier the blow-up in \eqref{eq:negativeL2}  follows from the above. Indeed, this is clear from the fact that the space $H^{\delta,2}(\T)$ becomes smaller as $\delta$ increases.
To prove \eqref{eq:supinfinite}, fix $t\in [0,\tau)$ and assume
$W(t,\omega)>0$. Let $m\geq 1$ be the unique integer such that
$m-1<\tilde{g}(t,\omega)\leq m$. Then one has
\begin{align*}
\|U(t,\omega)\|_{H^{s+2,2}(\T)}^2 &= 2 e^{2 \tilde{h}(t,\omega)}\sum_{n\geq 1} (n^2+1)^{s+2} e^{-\tilde{f}(t) (n - \tilde{g}(t,\omega))^2}
\\ & \geq 2 (m^2+1)^{s+2} e^{-\tilde{f}(t)} \geq ((\tilde{g}(t,\omega)+1)^2+1)^{s+2} e^{-\tilde{f}(t)}.
\end{align*}
Hence we obtain
\begin{align*}
\E &\|U(t)\|_{H^{s+2,2}(\T)}^{p} \\ & \geq  2e^{-\tilde{f}(t)p/2}
\int_{\{W(t)>0\}} ((\tilde{g}(t,\omega)+1)^2+1)^{\frac{s}{2} p + p}  e^{p\tilde{h}(t)} \, d\P
\\ & = 2e^{-\tilde{f}(t)}
\int_{\{W(1)>0\}} \Big(\frac{\beta W(1)}{\big(\sqrt{t}+2\theta \sqrt{t} +
t^{-1/2}} + 1\big)^2 + 1\Big)^{\frac{s p}{2}+ p} e^{\frac{p \beta^2 |W(1)|^2}{1+2\theta
+t^{-1}}} \, d\P
\\ & \geq 2 e^{-\tilde{f}(t)}\int_{\{W(1)>0\}} \Big(\Big(\frac{\beta
W(1)}{\gamma t^{-1/2}}+1\Big)^2 + 1\Big)^{\frac{s p}{2} + p} e^{ p\frac{\beta^2
|W(1)|^2}{1+2\theta+t^{-1}}} \, d\P
\\ & \geq 2 e^{-\tilde{f}(t)}\int_{\{W(1)>0\}} \Big(\Big(\frac{\beta
W(1)}{\gamma T^{-1/2}}+1\Big)^2 + 1\Big)^{\frac{s p}{2} + p} e^{ p\frac{\beta^2
|W(1)|^2}{1+2\theta+t^{-1}}} \, d\P
\end{align*}
The latter integral is infinite if $t=\tau$. Now
\eqref{eq:supinfinite} follows from the monotone convergence theorem and the
last lower estimate for $\E\|U(t)\|_{H^{s+2,2}(\T)}^p$.

Finally, we prove \eqref{eq:firstnegativeL2} for $p\in [2, \infty)$. Note that if $U\in L^p((0,\tau)\times\O;H^{r+2, 2}(\T))$ for some $r>s +\frac2p$,
then by using the mild formulation as in Step 2 of the proof of Theorem \ref{thm:wellposed} one obtains that
\[U\in L^p(\O; C([0,\tau];B^{r+2-\frac2p}_{2,p}(\T)))\hookrightarrow L^p(\O; C([0,\tau];H^{s+2, 2}(\T))),\]
where the embedding follows from Section \ref{sec:fspaces}. This would contradict
\eqref{eq:supinfinite}.
\end{proof}

\begin{remark}
From the above proof one also sees that if $2\alpha^2- 2\beta^2>1$, then
$\|U(t,\omega)\|_{H^{s,2}(\T)} = \infty$ for $t>(2\alpha^2- 2\beta^2-1)^{-1}$.
Indeed, this easily follows from \eqref{def:UH2} and the fact that
$\tilde{f}(t)<0$. Apparently, for such $t$, parabolicity is violated. On the other hand, if $2\alpha^2- 2\beta^2<1$, but $2\alpha^2+ 2\beta^2(p-1)<1$, the above proof shows that the ill-posedness is due to lack of $L^p(\O)$-integrability.
\end{remark}

\begin{remark}\label{rem:sigmaphiex}
The above theorem has an interesting consequence. Let
$2\alpha^2<1$ and let $\beta$ be arbitrary. If $p\in (1, \infty)$ is so
small that $2\alpha^2+2\beta^2(p-1)<1$, then \eqref{eq:SPDE} is well-posed.
\end{remark}

\section{Well-posedness and sharpness in the $L^p(L^q)$-setting\label{sec:LpLq}}

In this section we show that the problem \eqref{eq:SPDEFAappl} can also be
considered in an $L^q(\T)$-setting. The results are quite similar, but the
proofs are more involved, due to lack of orthogonality in $L^q(\T)$. Instead of using
orthogonality, we will rely on the Marcinkiewicz multiplier theorem, see Theorem  \ref{thm:marcin}.

Let $q\in (1, \infty)$ and $s\in \R$ and let $X = H^{s,q}(\T)$. Using Proposition \ref{prop:NVW} and Remark \ref{rem:isomUMD} one can extend Definitions
\ref{def:strong}, \ref{def:mild}, \ref{def:Lp} and Proposition
\ref{prop:equiv}. Here instead of $B(U) \in L^0(\O;L^2(0,T;X))$ (with $X = H^{s,2}(\T)$) in Definitions \ref{def:strong} and \ref{def:mild} (ii) one should assume $B(U)\in L^0(\O;H^{s,q}(\T;L^2(0,T)))$. In that way the stochastic integrability is defined as below Proposition \ref{prop:NVW}. This will used in the next theorem.

Concerning $L^p(H^{s,q})$-solutions one has the following.
\begin{theorem}\label{thm:Lq}
Let $p, q\in (1, \infty)$ and $s\in \R$ be arbitrary.
\begin{enumerate}[(i)]
\item\label{a:iLpLq} If $2\alpha^2+2\beta^2(p-1) <1$, then for every $u_0\in L^p(\O,\F_0;B^{s+2-\frac{2}{p}}_{q,p}(\T))$
there exists a unique $L^p(H^{s,q})$-solution $U$ of \eqref{eq:SPDEFAappl} on
$[0,\infty)$. Moreover, for every $T<\infty$ there is a constant $C_T$
independent of $u_0$ such that
\begin{align}
\label{eq:Lptime-ex2}
\|U\|_{L^p((0,T)\times\O;H^{s+2,q}(\T))} &\leq C_T
\|u_0\|_{L^p(\O;B^{s+2-\frac{2}{p}}_{q,p}(\T))}.
\end{align}
If, additionally, $q\geq 2$ and $p>2$, or $p=q=2$, then for every $T<\infty$ there is a constant $C_T$
independent of $u_0$ such that
\begin{equation}
\label{eq:Ctime-ex2}
\|U\|_{L^p(\O;C([0,T];B^{s+2-\frac{2}{p},q}_{q,p}(\T)))} \leq C_T \|u_0\|_{L^p(\O;B^{s+2-\frac{2}{p}}_{2,p}(\T))}.
\end{equation}

\item\label{a:iiLpLq} If $2\alpha^2 + 2\beta^2(p-1)>1$, and
\begin{equation}\label{eq:u0LpLq}
u_0(x) = \sum_{n\in \Z\setminus\{0\}} e^{-n^2} e^{-inx},
\end{equation}
then there exists a unique $L^p(H^{s,q})$-solution of \eqref{eq:SPDEFAappl} on $[0,\tau)$, where $\tau = (2\alpha^2 + 2(p-1)\beta^2-1)^{-1}$.
Moreover, $u_0\in \bigcap_{\delta\in \R} B^{\delta}_{q,p}(\T) = C^\infty(\T)$ and one has
\begin{align}
\label{eq:negativeLq} \limsup_{t\uparrow \tau} \|U(t)\|_{L^p(\O;H^{s,q}(\T))} & = \infty.
\end{align}
If, additionally, $q\geq 2$ and $p>2$, or $p=q=2$, then also
\[\|U\|_{L^p((0,\tau)\times\O;H^{s+2,q}(\T))}  = \infty.\]
\end{enumerate}
\end{theorem}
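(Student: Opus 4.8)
We outline a proof; throughout, $\theta:=\beta^2-\alpha^2$.

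\textbf{Reduction to Fourier side.} Exactly as in the proof of Theorem~\ref{thm:sharpex}, taking Fourier coefficients in \eqref{eq:SPDEFAappl} turns it into the family of scalar SDEs \eqref{eq:SPDEFourier2}, whose unique solutions are $v_n(t,\omega)=e^{-t(n^2+2b_n^2)}e^{2b_nW(t,\omega)}\hat u_0(n)$ with $b_n=\beta|n|+i\alpha n$; hence the only candidate for an $L^p(H^{s,q})$-solution is $U(t,\omega)=\sum_{n\in\Z}v_n(t,\omega)e^{inx}$, equivalently $U(t,\omega)=T_{m(t,\omega)}u_0$ with $m_n(t,\omega)=e^{-t(n^2+2b_n^2)}e^{2b_nW(t,\omega)}$, and uniqueness is immediate. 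Everything then reduces to (a) two-sided bounds for $\|U(t,\omega)\|_{H^{\sigma,q}(\T)}$ and (b) checking that this $U$ is a strong solution --- step (b) being the approximation argument from the proofs of Theorems~\ref{thm:wellposed}--\ref{thm:sharpex}, now using the extended solution concepts and the stochastic integration in the UMD space $H^{s,q}(\T)$ recalled at the beginning of this section.

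\textbf{Part~\eqref{a:iLpLq}.} Pick $\eps\in(0,1)$ with $r:=1-\eps$ still obeying $r+2\theta>2\beta^2p$ (possible exactly under $2\alpha^2+2\beta^2(p-1)<1$, and this forces $r+2\theta>0$). Completing the square in the exponent as in Step~1 of the proof of Theorem~\ref{thm:wellposed}, and using $2b_n^2=2\theta n^2+4i\alpha\beta\,n|n|$ with $n|n|=n^2\operatorname{sgn}(n)$, I would factor the symbol as
\[
m_n(t,\omega)=e^{-\frac\eps2 tn^2}\cdot e^{h(t,\omega)}\cdot e^{-tn^2(\frac\eps2+4i\alpha\beta\operatorname{sgn}(n))}\cdot e^{-f(t)(|n|-g(t,\omega))^2}\cdot e^{2i\alpha nW(t,\omega)},
\]
with $f(t)=t(r+2\theta)$, $g(t,\omega)=\frac{\beta W(t,\omega)}{(r+2\theta)t}$ and $h(t,\omega)=\frac{\beta^2|W(t,\omega)|^2}{(r+2\theta)t}$. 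The crucial point is that the last three factors are $L^q(\T)$-multipliers with norms bounded uniformly in $t\in(0,T]$ and $\omega$: the modulation $n\mapsto e^{2i\alpha nW}$ is an isometric translation; the "twisted heat kernel" $\xi\mapsto e^{-t\xi^2(\frac\eps2+4i\alpha\beta\operatorname{sgn}(\xi))}$ has, by Theorem~\ref{thm:marcin}, a $t$-independent Marcinkiewicz constant, since $\int_{2^{j-1}}^{2^j}2t|z|\xi e^{-\frac\eps2 t\xi^2}\,d\xi\le 2|z|/\eps$ with $z=\frac\eps2+4i\alpha\beta$; and the "Gaussian frequency bump" $\xi\mapsto e^{-f(t)(|\xi|-g(t,\omega))^2}$ has supremum $1$ and total variation bounded by an absolute constant. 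Since all Fourier multipliers commute, applying $(1-\Delta)^{(s+2)/2}$ and writing $(1+n^2)^{(s+2)/2}e^{-\frac\eps2 tn^2}=e^{\frac\eps2 t}(1+n^2)(1+n^2)^{s/2}e^{-\frac\eps2 t(1+n^2)}$ yields, for $t\in(0,T]$,
\[
\|U(t,\omega)\|_{H^{s+2,q}(\T)}\le C_{q,\alpha,\beta,T}\,e^{h(t,\omega)}\,\big\|(1-\Delta)e^{-\frac\eps2 t(1-\Delta)}u_0\big\|_{H^{s,q}(\T)}.
\]
Raising to the power $p$, taking expectations (using the independence of $u_0$ and $W$ and $\E e^{ph(t)}=\E e^{ph(1)}<\infty$, the latter being equivalent to $2\beta^2(p-1)+2\alpha^2<r$), integrating over $(0,T)$, rescaling $t$, and identifying $\int_0^1\|Ae^{-tA}u_0\|_{H^{s,q}}^p\,dt$ (for $A=1-\Delta$) with the $D_A(1-\frac1p,p)$-norm, which by \eqref{eq:interp} equals $\|u_0\|_{B^{s+2-\frac2p}_{q,p}(\T)}$, gives \eqref{eq:Lptime-ex2}. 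The stochastic integrability of $BU$ comes from the same factorisation carrying the extra symbol $b_n$ (dominated by $(1+n^2)^{1/2}$), and when $q\ge2$, $p>2$ (or $p=q=2$) one upgrades to \eqref{eq:Ctime-ex2} via the stochastic maximal $L^p$-regularity of \cite{NVW10}, exactly as in Step~2 of the proof of Theorem~\ref{thm:wellposed} --- the only place the extra restriction on $(p,q)$ is used.

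\textbf{Part~\eqref{a:iiLpLq}.} Existence and the $[0,\tau)$-regularity of $U$ for $u_0$ with $\hat u_0(n)=e^{-n^2}$ follow from the same multiplier estimates: given $T<\tau$, I would split $\hat u_0(n)=e^{-\delta n^2}\cdot e^{-(1-\delta)n^2}$ with $\delta\in(0,1)$ small enough that on $[0,T]$ the surviving Gaussian factor $e^{-n^2(1+t(1+2\theta)-\delta)}$ has positive exponent and the induced integrability exponent $\frac{p\beta^2t}{1+t(1+2\theta)-\delta}$ stays $<\frac12$ --- such a $\delta$ exists precisely when $T<\tau$ (for $\beta\ne0$; $\beta=0$ is classical) --- after which $e^{-\delta n^2}$ plays the role of $e^{-\frac\eps2 tn^2}$ and the argument of part~\eqref{a:iLpLq} applies. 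For the blow-up \eqref{eq:negativeLq}, the cleanest route is to reduce to part~\eqref{a:ii} of Theorem~\ref{thm:sharpex}: from $L^q(\T)\hookrightarrow L^1(\T)\hookrightarrow H^{-1,2}(\T)$ one gets the continuous embedding $H^{s,q}(\T)\hookrightarrow H^{s-1,2}(\T)$, hence $\|U(t)\|_{L^p(\O;H^{s,q}(\T))}\ge c\,\|U(t)\|_{L^p(\O;H^{s-1,2}(\T))}$; since this $U$ is (up to the reflection $n\mapsto-n$ in \eqref{eq:u0LpLq}) the $L^p(H^{s-1,2})$-solution of \eqref{eq:SPDEFAappl}, part~\eqref{a:ii} of Theorem~\ref{thm:sharpex} with Sobolev index $s-1$ gives $\limsup_{t\uparrow\tau}\|U(t)\|_{L^p(\O;H^{s-1,2}(\T))}=\infty$, whence \eqref{eq:negativeLq}. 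Alternatively one argues directly in $L^q$: for $g\in L^q(\T)$ and $m\in\Z$, H\"older together with $|\hat g(m)|\le\frac1{2\pi}\|g\|_{L^1(\T)}$ gives $\|g\|_{L^q(\T)}\ge(2\pi)^{1/q}|\hat g(m)|$, so $\|U(t,\omega)\|_{H^{\sigma,q}(\T)}\ge(2\pi)^{1/q}(1+m^2)^{\sigma/2}|v_m(t,\omega)|$, and choosing $m=m(t,\omega)$ to be the nearest positive integer to $g(t,\omega)$ on $\{W(t,\omega)>0\}$ reduces everything to the moment computation ending the proof of part~\eqref{a:ii} of Theorem~\ref{thm:sharpex}. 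Finally, for $q\ge2$, $p>2$ (or $p=q=2$), the statement $\|U\|_{L^p((0,\tau)\times\O;H^{s+2,q}(\T))}=\infty$ follows by contradiction from \eqref{eq:negativeLq} together with the stochastic maximal regularity embedding into $L^p(\O;C([0,\tau];B^{s+2-\frac2p}_{q,p}(\T)))$, exactly as \eqref{eq:firstnegativeL2} was deduced in Theorem~\ref{thm:sharpex}.

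\textbf{Main obstacle.} The heart of the matter is the uniform (in $t$ and $\omega$) $L^q(\T)$-multiplier bound for the twisted heat kernel $e^{-t\xi^2(\frac\eps2+4i\alpha\beta\operatorname{sgn}(\xi))}$ and the Gaussian frequency bump $e^{-f(t)(|\xi|-g)^2}$. In the $L^2$-proofs these were disposed of by the trivial pointwise estimate "$\le 1$"; in $L^q$ they need Theorem~\ref{thm:marcin}, and in particular one must keep the imaginary part $-4i\alpha\beta\,tn|n|$ of the symbol paired with a genuine factor of Gaussian real decay $e^{-\frac\eps2 tn^2}$ (this is why $\eps$ is split as $\frac\eps2+\frac\eps2$): on its own, the dyadic sums $\sum_{j=2^{n-1}}^{2^n-1}|m_{j+1}-m_j|$ of that factor grow like $t2^{2n}$, so its multiplier norm is not uniformly controlled. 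Once those uniform bounds are secured, the rest is a transcription of the corresponding steps in the proofs of Theorems~\ref{thm:wellposed}, \ref{thm:wellposed2} and \ref{thm:sharpex}.
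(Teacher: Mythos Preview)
Your proof is correct and follows essentially the same strategy as the paper: factor the Fourier symbol of the solution, control each factor as an $L^q(\T)$-multiplier via Theorem~\ref{thm:marcin}, and deduce part~\eqref{a:iiLpLq} from the $L^2$ case of Theorem~\ref{thm:sharpex} by Sobolev embedding. The one point where your execution differs is the bound for the Gaussian frequency bump $e^{-f(t)(|\xi|-g(t,\omega))^2}$: the paper first multiplies by $\one_{\{n\ge 0\}}$ (using the Riesz projection, Facts~\ref{fact:mult}(iii)) and then translates by $\lfloor g\rfloor$ so that the center lies in $[0,1]$, after which Marcinkiewicz applies to a smooth function with a fixed-scale bump; you instead invoke unimodality on each half-line to bound each dyadic variation by $2$ directly. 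Both arguments are valid, and yours is marginally shorter, though the paper's translation trick has the advantage of reducing to the continuous criterion \eqref{eq:Kmarcin} without any discussion of the discrete variation for a function with a kink at the origin. For the existence portion of part~\eqref{a:iiLpLq} the paper simply invokes Theorem~\ref{thm:sharpex}\eqref{a:ii} with shifted Sobolev index together with the embeddings $H^{\sigma,2}(\T)\hookrightarrow H^{\sigma-\frac12+\frac1q,q}(\T)$, rather than your direct $\delta$-splitting of $e^{-n^2}$; again both routes work.
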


\begin{proof}
\

\eqref{a:iiLpLq}: Since $\T$ is a bounded domain this is a consequence of Theorem
\ref{thm:sharpex} \eqref{a:ii} (with different choices of $s$), and the embeddings $H^{s,q}(\T)\hookrightarrow
H^{s,2}(\T)$ and, see \cite[Theorems 3.5.4 and 3.5.5]{SchmTr},
\[H^{s,2}(\T) \hookrightarrow H^{s-\frac12+\frac1q,q}(\T).\]

\eqref{a:iLpLq}:  The solution $U$ is again of the form \eqref{def:Usol2}. To
prove the estimates in \eqref{a:iLpLq} we apply Theorem \ref{thm:marcin}. Let
$\eps\in (0,\tfrac12)$ be such that $2\alpha^2 + 2\beta^2(p-1)<1-2\eps$. Let $r
= 1-2\eps$. With similar notation as in the proof of Theorem \ref{thm:sharpex}
and with $a_n = \F(u_0)(n)$, let
\begin{align}\label{eq:vnLq}
v_n(t) = e^{- t(n^2+ 2b^2_n)} e^{2\beta |n| W(t)} e^{2\alpha i n W(t)} a_n
 =e^{h(t,\omega)} m_n(t,\omega) e^{-\frac{\eps n^2}{2}t} \widetilde{a}_n(t),
\end{align}
where $b_n = \beta |n| + i\alpha n$, $\widetilde{a}_n(t) = e^{ 2\alpha i n W(t)}a_n$ and $m = m^{(1)} m^{(2)}$ with
\[m_n^{(1)}(t,\omega) =  e^{-\frac12 f(t) [|n|-g(t,\omega)]^2},  \ \ \ m_n^{(2)}(t,\omega) =  e^{k_n(t,\omega)}.\]
Here $f, g, h$ are given by
\[f(t) = 2 (r+2\theta)t,  \ \ g(t,\omega) = \frac{\beta W(t,\omega)}{(r+2\theta)t},  \ \ h(t,\omega) = \frac{\beta^2 |W(t,\omega)|^2}{(r+2\theta)t},\]
where $\theta = \beta^2-\alpha^2$ and
\[k_n(t,\omega) = -\frac{\eps n^2}{2}t + 4i \beta\alpha t n^2.\]
By Facts \ref{fact:mult} (ii) one has
\begin{equation}\label{eq:prodm1m2}
\|m(t,\omega)\|_{\M}\leq \|m^{(1)}(t,\omega)\|_{\M} \|m^{(2)}(t,\omega)\|_{\M}.
\end{equation}
Let $A= \{0, 1, 2, \ldots\}$ and $B = \Z\setminus A$. For the first term we have
\begin{align*}
\|m^{(1)}(t,\omega)\|_{\M} & \leq
\|\one_{A}\, m^{(1)}(t,\omega)\|_{\M} +
\|\one_{B}\, m^{(1)}(t,\omega)\|_{\M}
\\ & = \|\one_{A}\, m^{(1)}(t,\omega)\|_{\M} +
\|\one_{A\setminus\{0\}}\, m^{(1)}(t,\omega)\|_{\M}
\\ & \leq 2\|\one_A \, m^{(1)}(t,\omega)\|_{\M}
+ \|\one_{\{0\}}\, m^{(1)}(t,\omega)\|_{\M}
\\ & \leq 3\|\one_{A} \, m^{(1)}(t,\omega)\|_{\M},
\end{align*}
where we used Facts \ref{fact:mult} (iv) in the last step.

Let $r(t,\omega) = g(t,\omega) - g_0(t,\omega)$ with $g_0(t,\omega) = \lfloor g(t,\omega)\rfloor$, and let $m^{(3)}_n(t,\omega) =e^{-\frac12 f(t) (n-r(t,\omega))^2}$.
Let $A_{g(t,\omega)} = \{n\in \Z: n\geq -g_0(t,\omega)\}$. By Facts \ref{fact:mult} (i) and (iii) one sees that
\begin{align*}
\|\one_{A} \, m^{(1)}(t,\omega)\|_{\M} & = \|\one_{A_{g(t,\omega)}} \, m^{(3)}(t,\omega)\|_{\M}
\\ & \leq \|\one_{A_{g(t,\omega)}}\|_{\M} \|m^{(3)}(t,\omega)\|_{\M}
\\ & = \|\one_{A}\|_{\M} \|m^{(3)}(t,\omega)\|_{\M}
\\ &\leq C_{1,q} C_{2,q},
\end{align*}
where we used Theorem \ref{thm:marcin} and \eqref{eq:Kmarcin} applied to $m^{(3)}$ (note that $r(t,\omega)\in [0,1]$).

Therefore, we find that with $K_q = 3 C_{1,q} C_{2,q}$ one has
\begin{equation}\label{eq:m1est}
\|m^{(1)}(t,\omega)\|_{\M}\leq K_q.
\end{equation}

To estimate $\|m^{(2)}(t,\omega)\|_{\M}$, let $\zeta(\xi) = e^{-\frac{\eps \xi^2}{2}t + i 4 \beta\alpha t \xi^2}$, $\xi\in \R$. To check that \eqref{eq:Kmarcin} is finite, first note that $\zeta$ is uniformly bounded. Moreover, one has
\begin{align*}
|\zeta'(\xi)| & = C \eps |\xi| t e^{-\frac{\eps \xi^2}{2}t}, \ \ \xi\in \R\setminus\{0\},
\end{align*}
where $C=\Big(1+ \frac{64\beta^2\alpha^2}{\varepsilon^2}\Big)^{1/2}$.
To estimate $\int_{2^{n-1}}^{2^n} |\zeta'(\xi)| \, d\xi$ and $\int_{-2^{n}}^{-2^{n-1}} |\zeta'(\xi)| \, d\xi$, by symmetry it suffices to consider the first one. We obtain
\begin{align*}
\int_{2^{n-1}}^{2^n} |\zeta'(\xi)| \, d\xi & \leq C \int_{2^{n-1}}^{2^n}  \eps \xi t  e^{-\frac{\eps \xi^2}{2}t}  \, d\xi
 = C  \big[ e^{- \eps 2^{2n-3} t} - e^{- \eps 2^{2n-1} t}\big]\leq C.
\end{align*}
Hence, Theorem \ref{thm:marcin} and \eqref{eq:Kmarcin} yield that $\|m^{(2)}(t,\omega)\|_{\M}\leq c_q C$. Therefore, from \eqref{eq:prodm1m2} and \eqref{eq:m1est} we can conclude that with $C_q = K_q c_q C$, one has
\begin{equation}\label{eq:m2est}
\|m(t,\omega)\|_{\M}\leq C_q.
\end{equation}

Let $c_n(t,\omega) = (1+n^2)^{(s+2)/2}  e^{-\frac{\eps n^2}{2}t}
\widetilde{a}_n(t,\omega)$, where we recall $\widetilde{a}_n(t) = e^{ 2\alpha i n W(t)}a_n$. Let $e_n(x) = e^{inx}$. Combining the definition of $U$,
\eqref{eq:vnLq} and \eqref{eq:m2est} we obtain that
\begin{align*}
\|U(t,\omega)\|_{H^{s+2,q}(\T)} &= \Big\|\sum_{n\in \Z} (1+n^2)^{(s+2)/2}
e^{h(t,\omega)} m_n(t,\omega) e^{-\frac{\eps n^2}{2}t} \widetilde{a}_n(t,\omega)
e_n \Big\|_{L^q(\T)}
\\ & =e^{h(t,\omega)} \Big\|\sum_{n\in \Z}  m_n(t,\omega) c_n(t,\omega) e_n\Big\|_{L^q(\T)}
\\ & \leq C_q e^{h(t,\omega)} \Big\|\sum_{n\in \Z}  c_n(t,\omega) e_n \Big\|_{L^q(\T)}
\\ & = C_q e^{h(t,\omega)} \Big\|\sum_{n\in \Z}  (1+n^2)^{(s+2)/2}  e^{-\frac{\eps n^2}{2}t}   a_n e_n(\cdot+ \alpha W(t,\omega)) \Big\|_{L^q(\T)}
\\ & = C_q e^{h(t,\omega)} \Big\|\sum_{n\in \Z}  (1+n^2)^{(s+2)/2}  e^{-\frac{\eps n^2}{2}t}  a_n e_n \Big\|_{L^q(\T)}
\\ & = C_q e^{h(t,\omega)} \Big\| (1-\Delta)^{(2+s)/2} e^{-\frac{t \eps}{2} \Delta} u_0\Big\|_{L^q(\T)}.
\end{align*}
By independence it follows that
\begin{align}\label{eq:pointwiseestU}
\E\|U(t)\|_{H^{s+2,q}(\T)}^q &\leq C_q^p \E (e^{p h(t,\cdot)}) \E \Big\| \big(1 -\Delta\big)^{(2+s)/2} e^{-\frac{t \eps}{2} \Delta} u_0\Big\|_{L^q(\T)}^p.
\end{align}
Recall that as before since $2\alpha^2  + 2\beta^2(p-1)<1$ one has $M^p:= \E (e^{p h(t,\omega)}) =\E (e^{p h(1,\omega)})<\infty$.
Integrating with respect to $t\in [0,T]$, yields that
\begin{align*}
\int_0^T \E\|U(t)\|_{H^{s+2,q}(\T)}^p \, dt& \leq C_q^p M^p \E \int_0^T \Big\| \big(1 -\Delta\big)^{(2+s)/2} e^{-\frac{t \eps}{2} \Delta} u_0\Big\|_{L^q(\T)}^p \, dt
\\ & \leq C C_q^p M^p  \E \|u_0\|_{B^{s+2-\frac{2}{p}}_{q,p}(\T)}^p,
\end{align*}
where the last estimate follows from \eqref{eq:interp} and
\eqref{eq:realintDA}. This proves \eqref{eq:Lptime-ex2}. The fact that $U$ is an $L^p$-solution of
\eqref{eq:SPDEFAappl} can be seen as in Theorems \ref{thm:wellposed} and \ref{thm:wellposed2}, but for convenience we present a detailed argument.

We check the conditions of Definitions \ref{def:strong} and \ref{def:Lp}. Recall that the second part of Definition \ref{def:strong} (ii) should be replaced by $B(U)\in L^0(\O;H^{s,q}(\T;L^2(0,T)))$ as explained at the beginning of Section \ref{sec:LpLq} (also see Remark \ref{rem:isomUMD}).

The strong measurability and adaptedness of $U:[0,T]\times\O\to H^{s,q}(\T)$ follows from the corresponding properties of $v_n$ defined in \eqref{eq:defvn} and the convergence of the series \eqref{def:Usol2} in $L^p(\O\times(0,T);H^{s,q}(\T))$ (which follows from \eqref{eq:Lptime-ex2}). Since, $D(A) = H^{s+2,q}(\T)$, the fact that $U\in L^p(\O\times(0,T);D(A))$ is immediate from \eqref{eq:Lptime-ex2}.

Next we show that $B(U)\in L^p(\O;H^{s,q}(\T;L^2(0,T)))$. By \eqref{eq:pointwiseestU} one has that for all $t\in (0,T]$,
\[(1-\Delta)^{(s+2)/2} U(t) = \sum_{n\in \Z} (1+n^2)^{(s+2)/2} v_n(t) e^{i n \cdot} \ \ \ \text{converges in $L^p(\O;L^q(\T))$}\]
\begin{equation}\label{eq:pointwisebound}
\begin{aligned}
\|U(t)\|_{L^p(\O;H^{s+2,q}(\T))}& \leq C \Big\| \big(1 -\Delta\big)^{(2+s)/2} e^{-\frac{t \eps}{2} \Delta} u_0\Big\|_{L^p(\O;L^q(\T))}
\\ & \leq C_{t} \|u_0\|_{L^p(\O;H^{s,q}(\T))}.
\end{aligned}
\end{equation}
Applying \eqref{eq:SPDEFourier2} yields that for each $t\in [0,T]$ and $n\in \Z$,
\begin{equation}\label{eq:solbeq2}
\begin{aligned}
\int_0^T (1+n^2)^{s/2} (2b_n v_{n}(s)\, d W(s) & = (1+n^2)^{s/2} v_{n}(T) - (1+n^2)^{s/2} v_n(0) \\ & \qquad+ \int_0^T (1+n^2)^{s/2} n^2 v_n(s)\, d s := \eta_n(t).
\end{aligned}
\end{equation}
Let for each $t\in [0,T]$, $\eta(t)\in L^p(\O;L^q(\T))$ be defined by $\eta(t)(x) = \sum_{n\in \Z} \eta_n(t) e^{in x}$. Then by \eqref{eq:pointwisebound} and \eqref{eq:solbeq2} for every $t\in [0,T]$,
\[\eta(t) = (1-\Delta)^{s/2} U(t) - (1-\Delta)^{s/2} u_0 + \int_0^t (1-\Delta)^{s/2} A U(s) \, ds,\]
is in $L^p(\O;L^q(\T))$.
Using \eqref{eq:pointwisebound} and \eqref{eq:Lptime-ex2} it follows that for all $t\in (0,T]$,
\begin{align*}
\|\eta(t)\|_{L^p(\O;L^q(\T))} & \leq \|U(t)\|_{L^p(\O;H^{s,q}(\T))}+ \|u_0\|_{L^p(\O;H^{s,q}(\T))} \\ & \qquad \qquad \qquad \qquad + \int_0^t \|A U(s)\|_{L^p(\O;H^{s,q}(\T))} \, d s
\\ & \leq C_{t}  \|u_0\|_{L^p(\O;H^{s,q}(\T))} +t^{1-\frac{1}{p}}  \|AU\|_{L^p((0,T)\times\O;H^{s,q}(\T))}
\\ & \leq C_{t,T} \|u_0\|_{L^p(\O;B^{s+2-\frac{2}{p}}_{2,p}(\T))} .
\end{align*}
We claim that $B(U)\in L^p(\O;H^{s,q}(\T;L^2(0,T)))$, and for all $t\in [0,T]$,
\[\int_0^t (1-\Delta)^{s/2} 2 B(U) \, d W(s) = \eta(t)\]
where the stochastic integral exists in $L^p(\O;L^q(\T))$, see Appendix \ref{sec:stochintLq}.
By \eqref{eq:solbeq2} and Lemma \ref{lem:suffstochint} (with $\phi = (1-\Delta)^{s/2}2B(U)$, $(\psi_n)_{n\in \Z} = ((1+n^2)^{s/2} 2b_n v_n)_{n\in \Z}$ and $\OO = \N$), the claim
follows, and from \eqref{eq:itoisom} we obtain
\begin{align*}
\| (1-\Delta)^{s/2} 2 B(U)\|_{L^p(\O;L^q(\T;L^2(0,T)))} & \leq c_{p,q} \|\eta_T\|_{L^p(\O;L^q(\T))} \\ & \leq c_{p,q} C_{T,T} \|u_0\|_{L^p(\O;B^{s+2-\frac{2}{p}}_{2,p}(\T))}.
\end{align*}

Finally, assume $q\geq 2$ and $p>2$ or $p=q=2$. To prove \eqref{eq:Ctime-ex2} one can proceed as in Step 2 of the proof of Theorem \ref{thm:wellposed}. Indeed, since $U$ is a mild solution as well, one has
\[U(t) = e^{tA} u_0 + \int_0^t e^{(t-s)A} B U(s) \, dW(s).\]
Now \eqref{eq:Ctime-ex2}  follows from \cite[Theorem 1.2]{NVW10}. Note that the assumptions of that theorem can be checked with Theorem \ref{thm:marcin} (cf.\ \cite[Example 10.2b]{KuWe}).
\end{proof}

\begin{remark}
Note that the proof that $B(U)$ is stochastically integrable can be simplified if $p,q\geq 2$. Indeed, the fact that $U$ is an $L^p$-solution, already implies that $B(U)\in L^p(\O;L^2(0,T;H^{s,q}(\T))$ and therefore, stochastic integrability can be deduced from Corollary \ref{cor:qgroter2}.
\end{remark}

\begin{appendix}
\section{Stochastic integrals in $L^q$-spaces\label{sec:stochintLq}}

Recall that if $X$ is a Hilbert space and $\phi:[0,T]\times\Omega \to
X$ is an adapted and strongly measurable process with $\phi\in
L^0(\O;L^2(0,T;X))$, then $\phi$ is stochastically integrable. Below we
explain stochastic integration theory of \cite{NVW1} in the cases $X = L^q$ with $q\in (1,
\infty)$ and also recall a weak sufficient condition for stochastic integrability.
The stochastic integration theory from \cite{NVW1} holds for the larger class of UMD Banach
spaces, but we only consider $L^q$-spaces below.
Even for the classical Hilbert space case $q=2$, the second equivalent
condition below is a useful characterization of stochastic
integrability.
\begin{proposition}\label{prop:NVW}
Let $(\OO, \Sigma, \mu)$ be a $\sigma$-finite measure space. Let $p,q\in (1, \infty)$. Let $T>0$.
For an adapted and strongly measurable process $\phi:[0,T]\times\Omega \to L^q(\OO)$ the following three assertions are equivalent.
\begin{enumerate}[$(1)$]
\item There exists a sequence of adapted step processes $(\phi_n)_{n\geq 1}$ such that
\begin{enumerate}[(i)]
\item $\displaystyle \limn \|\phi - \phi_n\|_{L^p(\O;L^q(\OO;L^2(0,T)))} = 0$,
\item $(\int_0^T \phi_n(t) \, d W(t))_{n\geq 1}$ is Cauchy sequence in $L^p(\O;L^q(\OO))$.
\end{enumerate}
\item There exists a random variable $\eta\in L^p(\O;L^q(\OO))$ such that for all sets $A\in \Sigma$ with finite measure one has
$(t,\omega) \mapsto \int_A \phi(t,\omega) \, d\mu\in L^p(\O;L^2(0,T))$, and
\[\int_A \eta \, d\mu  =  \int_0^T \int_A \phi(t) \, d\mu \, d W(t) \ \ \text{in $L^p(\O)$}.\]
\item $\|\phi\|_{L^p(\O;L^q(\OO;L^2(0,T)))}<\infty$.
\end{enumerate}
Moreover, in this situation one has $\displaystyle \limn \int_0^T \phi_n(t) \, d W(t) = \eta$, and
\begin{align}\label{eq:itoisom}
c_{p,q}^{-1} \|\phi\|_{L^p(\O;L^q(\OO;L^2(0,T)))} \leq \|\eta\|_{L^p(\O;L^q(\OO))} \leq C_{p,q} \|\phi\|_{L^p(\O;L^q(\OO;L^2(0,T)))}.
\end{align}
\end{proposition}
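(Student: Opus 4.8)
The plan is to obtain this proposition as the specialization to $E=L^q(\OO)$ of the stochastic integration theory for UMD spaces developed in \cite{NVW1}. Recall from that theory that an adapted, strongly measurable $\phi\colon[0,T]\times\O\to E$ admits an approximating sequence of step processes as in $(1)$ if and only if $\phi$ represents, for almost every $\omega$, an element of the space of $\gamma$-radonifying operators $\gamma(L^2(0,T);E)$ with $\phi\in L^p(\O;\gamma(L^2(0,T);E))$; if and only if $\phi$ is stochastically integrable in the \emph{weak} sense, i.e.\ $\langle x^*,\phi\rangle$ is scalarly stochastically integrable for every $x^*\in E^*$ and there is an $\eta\in L^p(\O;E)$ with $\langle x^*,\eta\rangle=\int_0^T\langle x^*,\phi\rangle\,dW$ almost surely for every $x^*\in E^*$; and in that case $\int_0^T\phi_n\,dW\to\eta$ in $L^p(\O;E)$ while $\|\eta\|_{L^p(\O;E)}$ is comparable, with constants depending only on $p$ and $E$, to $\|\phi\|_{L^p(\O;\gamma(L^2(0,T);E))}$. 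The first step is then to invoke the classical square-function identification $\gamma(L^2(0,T);L^q(\OO))=L^q(\OO;L^2(0,T))$ with equivalent norms. Substituting this into the statements above yields at once the equivalence $(1)\Leftrightarrow(3)$, the two-sided estimate \eqref{eq:itoisom}, and the convergence $\int_0^T\phi_n\,dW\to\eta$.

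Next I would prove $(1)\Rightarrow(2)$. Let $\eta$ be the stochastic integral of $\phi$ and fix $A\in\Sigma$ with $\mu(A)<\infty$, so that $x^*=\one_A$ defines an element of $(L^q(\OO))^*=L^{q'}(\OO)$. Minkowski's integral inequality followed by H\"older's inequality gives, for a.e.\ $\omega$, $\big\|\int_A\phi(\cdot,\omega)\,d\mu\big\|_{L^2(0,T)}\le\mu(A)^{1-1/q}\,\|\phi(\cdot,\omega)\|_{L^q(\OO;L^2(0,T))}$, so $(t,\omega)\mapsto\int_A\phi\,d\mu$ lies in $L^p(\O;L^2(0,T))$ by $(3)$. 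The weak characterization applied to $x^*=\one_A$ is precisely the identity $\int_A\eta\,d\mu=\int_0^T\int_A\phi\,d\mu\,dW$ in $L^p(\O)$, which is $(2)$.

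For $(2)\Rightarrow(1)$ I would verify the weak formulation directly. Given $\eta$ as in $(2)$, by linearity $\langle g,\eta\rangle=\int_0^T\langle g,\phi\rangle\,dW$ holds for every simple function $g$ on $\OO$ (a finite linear combination of indicators of finite-measure sets), with $\langle g,\phi\rangle\in L^p(\O;L^2(0,T))$ scalarly stochastically integrable. Given an arbitrary $x^*\in L^{q'}(\OO)$, choose simple $g_k\to x^*$ in $L^{q'}(\OO)$; by the scalar It\^o isometry together with the Burkholder--Davis--Gundy inequalities, $\|\langle g_k,\phi\rangle-\langle g_l,\phi\rangle\|_{L^p(\O;L^2(0,T))}$ is comparable (constants depending on $p$) to $\|\langle g_k-g_l,\eta\rangle\|_{L^p(\O)}\le\|g_k-g_l\|_{L^{q'}(\OO)}\,\|\eta\|_{L^p(\O;L^q(\OO))}$, so $(\langle g_k,\phi\rangle)_k$ is Cauchy in $L^p(\O;L^2(0,T))$. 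Its limit equals $\langle x^*,\phi\rangle$, since $\langle g_k,\phi(t,\omega)\rangle\to\langle x^*,\phi(t,\omega)\rangle$ pointwise a.e.\ by H\"older; hence $\langle x^*,\phi\rangle\in L^p(\O;L^2(0,T))$ is scalarly stochastically integrable and $\int_0^T\langle x^*,\phi\rangle\,dW=\lim_k\langle g_k,\eta\rangle=\langle x^*,\eta\rangle$ in $L^p(\O)$. Thus $\phi$ is weakly stochastically integrable with integral $\eta$, and by \cite{NVW1} this is equivalent to $(1)$ (and, via the $\gamma$-identification, to $(3)$); the same argument with $x^*$ ranging over simple functions also recovers $\lim_n\int_0^T\phi_n\,dW=\eta$.

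The main obstacle is the implication $(2)\Rightarrow(1)$: condition $(2)$ only gives information about the scalar ``coordinate'' processes $\int_A\phi\,d\mu$, and one cannot reconstruct the norm $\|\phi\|_{L^p(\O;L^q(\OO;L^2(0,T)))}$ directly from them, since the Minkowski inequality used in $(1)\Rightarrow(2)$ is not reversible. This forces the argument to pass through the coordinatewise (weak) definition of the vector-valued stochastic integral and to rely on the UMD structure of $L^q$ — which is exactly what makes the weak and step-process definitions equivalent and what underlies the identification $\gamma(L^2(0,T);L^q(\OO))=L^q(\OO;L^2(0,T))$ — rather than on a bare norm estimate. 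All remaining points are routine bookkeeping once the results of \cite{NVW1} and classical scalar stochastic calculus are in hand.
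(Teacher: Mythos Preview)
Your proof is correct and follows essentially the same route as the paper: both invoke \cite{NVW1} (specifically Theorem~3.6 and Corollary~3.11 there, together with the square-function identification $\gamma(L^2(0,T);L^q(\OO))=L^q(\OO;L^2(0,T))$) and handle the only nontrivial point, namely $(2)\Rightarrow(1)$, by extending the duality identity from indicators $\one_A$ to arbitrary $g\in L^{q'}(\OO)$ via a density/Cauchy argument. Your write-up is more explicit than the paper's terse sketch, but the strategy is the same.
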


\begin{remark}
Note that the identity in (2) holds in $L^p(\O)$ by the Burkholder-Davis-Gundy inequalities. In order to check (3) one needs to take a version of $\phi$ which is scalar valued and depends on $[0,T]\times\O\times\OO$. Such a version can be obtained by strong measurability.
\end{remark}

A process $\phi$ which satisfies any of these equivalent conditions is called {\em
$L^p$-stochastically integrable on $[0,T]$}, and we will write
\[\int_0^T \phi(t) \, d W(t) = \eta.\]
It follows from (3) that $\phi$ is $L^p$-stochastically integrable on $[0,t]$ as well.
By the Doob maximal inequality, see \cite[Proposition 7.16]{Kal}, one additionally gets
\[c_{p,q}^{-1} \|\phi\|_{L^p(\O;L^q(\OO;L^2(0,T)))}  \leq \Big\|t\mapsto \int_0^t \phi(s) \, dW(s)\Big\|_{F} \leq C_{p,q} \|\phi\|_{L^p(\O;L^q(\OO;L^2(0,T)))},\]
where $F = L^p(\O;C([0,T];L^q(\OO)))$. Moreover, in \cite[Theorem 5.9]{NVW1}
it has been shown that Proposition \ref{prop:NVW} can be localized and it is
enough to assume $\phi\in L^0(\O;L^q(\OO;L^2(0,T)))$ in order to have
stochastic integrability.

\begin{proof}
The result follows from \cite[Theorem 3.6 and Corollary 3.11]{NVW1} with $H=\R$. For this let us note that (2) implies that
for all $g\in L^q(\OO)$ which are finite linear combinations of $\one_{A}$ with $\mu(A)<\infty$, one has
\begin{equation}\label{eq:dualityLq}
\lb \eta, g\rb  =  \int_0^T \lb \phi(t), g\rb \, d W(t) \ \ \text{in $L^p(\O)$},
\end{equation}
where we use the notation $\lb \cdot, \cdot\rb$ for the duality of $L^q(\OO)$ and $L^{q'}(\OO)$. By a limiting argument one can see that for all $g\in L^{q'}(\OO)$ one has $\lb \phi, g\rb\in L^p(\O;L^2(0,T))$, and \eqref{eq:dualityLq} holds. This is the equivalent condition in \cite[Theorem 3.6]{NVW1}. Moreover, it is well-known that either (1) or (3) imply that for all $g\in L^{q'}(\OO)$ one has $\lb \phi, g\rb\in L^p(\O;L^2(0,T))$. See \cite[Corollary 3.11]{NVW1} and reference given there.
\end{proof}

If $q\in [2, \infty)$ there is an easy sufficient condition for $L^p$-stochastic integrability.
\begin{corollary}\label{cor:qgroter2}
Let $(\OO, \Sigma, \mu)$ be a $\sigma$-finite measure space. Let $p\in (1, \infty)$ and $q\in [2, \infty)$. Let $T>0$.
Let $\phi:[0,T]\times\Omega \to L^q(\OO)$ be an adapted and strongly measurable process.
If $\|\phi\|_{L^p(\O;L^2(0,T;L^q(\OO)))}<\infty$, then $\phi$ is $L^p$-stochastically integrable on $[0,T]$ and
\[\Big\|\int_0^T \phi(t) \, d W(t)\Big\|_{L^p(\O;L^q(\OO))}\leq C_{p,q} \|\phi\|_{L^p(\O;L^2(0,T;L^q(\OO)))}.\]
\end{corollary}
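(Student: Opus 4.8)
The plan is to reduce everything to Proposition \ref{prop:NVW} by means of the generalized Minkowski inequality. Recall that, by assertion (3) of that proposition, the process $\phi$ is $L^p$-stochastically integrable on $[0,T]$ if and only if $\|\phi\|_{L^p(\O;L^q(\OO;L^2(0,T)))}<\infty$, and that in this case the right-hand inequality in \eqref{eq:itoisom} bounds $\|\int_0^T \phi(t)\,dW(t)\|_{L^p(\O;L^q(\OO))}$ by $C_{p,q}\|\phi\|_{L^p(\O;L^q(\OO;L^2(0,T)))}$. Hence it suffices to prove the inequality
\[\|\phi\|_{L^p(\O;L^q(\OO;L^2(0,T)))}\leq \|\phi\|_{L^p(\O;L^2(0,T;L^q(\OO)))}.\]

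To this end, fix $\omega\in\O$ and work with a jointly measurable scalar-valued version of $\phi$ on $[0,T]\times\O\times\OO$, which exists by strong measurability (cf.\ the remark following Proposition \ref{prop:NVW}). Since $q\geq 2$, i.e.\ $q/2\geq 1$, Minkowski's integral inequality applied to the exponent $q/2$ gives
\[\Big(\int_{\OO}\Big(\int_0^T |\phi(t,\omega,\xi)|^2\,dt\Big)^{q/2} d\mu(\xi)\Big)^{1/q}\leq \Big(\int_0^T \Big(\int_{\OO}|\phi(t,\omega,\xi)|^q\,d\mu(\xi)\Big)^{2/q}dt\Big)^{1/2},\]
that is, $\|\phi(\cdot,\omega)\|_{L^q(\OO;L^2(0,T))}\leq \|\phi(\cdot,\omega)\|_{L^2(0,T;L^q(\OO))}$. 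Raising both sides to the power $p$ and integrating over $\O$ yields the displayed inequality, and in particular $\|\phi\|_{L^p(\O;L^q(\OO;L^2(0,T)))}<\infty$.

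It now follows from Proposition \ref{prop:NVW} that $\phi$ is $L^p$-stochastically integrable on $[0,T]$, and combining the second inequality in \eqref{eq:itoisom} with the bound just proved gives
\[\Big\|\int_0^T \phi(t)\,dW(t)\Big\|_{L^p(\O;L^q(\OO))}\leq C_{p,q}\|\phi\|_{L^p(\O;L^q(\OO;L^2(0,T)))}\leq C_{p,q}\|\phi\|_{L^p(\O;L^2(0,T;L^q(\OO)))},\]
as claimed. There is no serious obstacle here; the only point requiring attention is that the embedding $L^2(0,T;L^q(\OO))\hookrightarrow L^q(\OO;L^2(0,T))$ genuinely needs $q\geq 2$ and fails for $q<2$, which is precisely why Proposition \ref{prop:NVW}, valid for all $q\in(1,\infty)$, cannot be simplified in this way in general.
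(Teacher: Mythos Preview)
Your proof is correct and follows essentially the same approach as the paper: both use Minkowski's integral inequality (valid here because $q/2\ge 1$) to obtain the embedding $L^2(0,T;L^q(\OO))\hookrightarrow L^q(\OO;L^2(0,T))$, and then invoke Proposition~\ref{prop:NVW}. Your version is simply more explicit about the computation, whereas the paper just cites the embedding.
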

This result can be localized, and it is sufficient to have $\phi\in
L^0(\O;L^2(0,T;L^q(\OO)))$ in order to a stochastic integral. In Corollary
\ref{cor:qgroter2} one can replace $L^q(\OO)$ by any space $X$ which has
martingale type $2$, see \cite{Brz1,Dettunpub,Nh,Ondrej,Seidopt}.

\begin{proof}
By Minkowski's integral inequality, see \cite[Lemma 3.3.1]{KwWo}, one has
\[L^p(\O;L^2(0,T;L^q(\OO)))\hookrightarrow \|\phi\|_{L^p(\O;L^q(\OO;L^2(0,T)))}.\]
Therefore, the result follows from Proposition \ref{prop:NVW}.
\end{proof}

The following lemma is used in Sections \ref{sec:LpL2} and \ref{sec:LpLq}.
\begin{lemma}\label{lem:suffstochint}
Let $(\OO, \Sigma, \mu)$ be a $\sigma$-finite measure space. Let $p\in (1, \infty)$ and $q\in (1, \infty)$. Let $T>0$.
Let $\phi:[0,T]\times\Omega \to L^q(\OO)$ be an adapted and strongly measurable process.
Assume the following conditions:
\begin{enumerate}[$(1)$]
\item Assume that there exist a measurable function $\psi:[0,T]\times\O\times\OO\to \R$ such that $\phi(t,\omega)(x) = \psi(t,\omega, x)$ for almost all $t\in [0,T]$, $\omega\in \O$ and $x\in \OO$, and for all $x\in \OO$, $\psi(\cdot, x)$ is adapted.
\item For almost all $x\in \OO$, $\psi(\cdot,x)\in L^p(\O;L^2(0,T))$.
\item Assume that there is a $\eta\in L^p(\O;L^q(\OO))$ such that
\[\eta(\omega)(x) =  \Big(\int_0^T \psi(t,x) \, d W(t)\Big)(\omega) \ \ \text{for almost all $\omega\in \O$, and $x\in \OO$}.\]
\end{enumerate}
Then $\phi$ is $L^p$-stochastically integrable on $[0,T]$ and
\[\int_0^T \phi(t) \, d W(t) = \eta.\]
\end{lemma}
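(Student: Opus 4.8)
The plan is to verify condition $(2)$ of Proposition \ref{prop:NVW} with the random variable $\eta$ provided by hypothesis $(3)$. So fix $A\in\Sigma$ with $\mu(A)<\infty$; since $q\in(1,\infty)$ we have $\one_A\in L^{q'}(\OO)$, hence $\int_A\eta\,d\mu=\lb\eta,\one_A\rb$ is a well-defined element of $L^p(\O)$, and it suffices to show: (a) that $\Phi_A\colon(t,\omega)\mapsto\int_A\phi(t,\omega)\,d\mu$ is an adapted, strongly measurable process lying in $L^p(\O;L^2(0,T))$, and (b) that $\int_0^T\Phi_A(t)\,dW(t)=\int_A\eta\,d\mu$ in $L^p(\O)$. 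By hypothesis $(1)$ we may work with the jointly measurable function $\psi$ in place of $\phi$; adaptedness and strong measurability of $\Phi_A$ are then routine (a monotone-class argument, using that each $\psi(\cdot,x)$ is adapted).

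For (a) I would argue via the scalar integration theory. By hypothesis $(2)$, for a.e. $x\in\OO$ the scalar adapted process $\psi(\cdot,x)$ lies in $L^p(\O;L^2(0,T))\subseteq L^0(\O;L^2(0,T))$, hence is stochastically integrable, and by hypothesis $(3)$ its integral is $\int_0^T\psi(t,x)\,dW(t)=\eta(\cdot)(x)$. Since $p>1$, the scalar Burkholder--Davis--Gundy inequalities together with Doob's maximal inequality give a constant $c_p$ with
\[c_p^{-1}\|\psi(\cdot,x)\|_{L^p(\O;L^2(0,T))}\leq\|\eta(\cdot)(x)\|_{L^p(\O)}\leq c_p\|\psi(\cdot,x)\|_{L^p(\O;L^2(0,T))}\qquad\text{for a.e. }x\in\OO.\]
Using $\mu(A)<\infty$ and $q>1$ together with H\"older's inequality in the variable $x$ and Minkowski's integral inequality, one deduces from $\eta\in L^p(\O;L^q(\OO))$ that $x\mapsto\|\eta(\cdot)(x)\|_{L^p(\O)}$ belongs to $L^1(A,\mu)$; then Minkowski's inequality for the $L^p(\O;L^2(0,T))$-valued Bochner integral $\int_A(\cdot)\,d\mu(x)$ yields
\[\|\Phi_A\|_{L^p(\O;L^2(0,T))}\leq\int_A\|\psi(\cdot,x)\|_{L^p(\O;L^2(0,T))}\,d\mu(x)\leq c_p\int_A\|\eta(\cdot)(x)\|_{L^p(\O)}\,d\mu(x)<\infty.\]

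For (b), note that the bound just obtained forces $\int_A\|\psi(\cdot,\omega,x)\|_{L^2(0,T)}\,d\mu(x)<\infty$ for a.e. $\omega$. Hence the (scalar) stochastic Fubini theorem applies to $\psi$ on $[0,T]\times\O\times A$ and gives, with hypothesis $(3)$ used in the last step,
\[\int_0^T\Phi_A(t)\,dW(t)=\int_0^T\Big(\int_A\psi(t,x)\,d\mu(x)\Big)dW(t)=\int_A\Big(\int_0^T\psi(t,x)\,dW(t)\Big)d\mu(x)=\int_A\eta\,d\mu\]
in $L^p(\O)$. (If one prefers to avoid citing stochastic Fubini as a black box, this identity is immediate for step processes $\psi$ and extends by the density and continuity of the stochastic integral already exploited in the proof of Proposition \ref{prop:NVW}.) Since $A\in\Sigma$ with $\mu(A)<\infty$ was arbitrary, condition $(2)$ of Proposition \ref{prop:NVW} holds, and therefore $\phi$ is $L^p$-stochastically integrable on $[0,T]$ with $\int_0^T\phi(t)\,dW(t)=\eta$.

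I expect the main obstacle to be the stochastic Fubini step: justifying the interchange of $\int_0^T(\cdot)\,dW$ with $\int_A(\cdot)\,d\mu$ under precisely the integrability available here. The secondary technical nuisance is the mixed-norm bookkeeping needed to see that the restriction of $\eta$ to a finite-measure set $A$ lies in $L^1(A;L^p(\O))$; this is the only point where the hypothesis $\eta\in L^p(\O;L^q(\OO))$ (rather than merely $\eta(\cdot)(x)\in L^p(\O)$ for a.e. $x$) is used, and it is also where one invokes $\mu(A)<\infty$.
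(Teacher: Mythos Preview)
Your approach coincides with the paper's: verify condition~(2) of Proposition~\ref{prop:NVW} by combining the scalar BDG inequality with Minkowski's integral inequality and a stochastic Fubini step. For $p\le q$ your argument is essentially the paper's proof.

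There is, however, a genuine gap when $p>q$. Your key estimate
\[
\int_A\|\eta(\cdot)(x)\|_{L^p(\O)}\,d\mu(x)\le \mu(A)^{1/q'}\,\|\eta\|_{L^q(\OO;L^p(\O))}
\]
followed by Minkowski's inequality to control $\|\eta\|_{L^q(\OO;L^p(\O))}$ by $\|\eta\|_{L^p(\O;L^q(\OO))}$ requires $p\le q$; the embedding $L^p(\O;L^q(\OO))\hookrightarrow L^q(\OO;L^p(\O))$ fails for $p>q$, and in general $x\mapsto\|\eta(\cdot)(x)\|_{L^p(\O)}$ need not be locally integrable. So for $p>q$ you have not established $\Phi_A\in L^p(\O;L^2(0,T))$, and the subsequent stochastic Fubini argument is not justified.

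The paper closes this gap by a short bootstrap: for $p>q$ one first applies the case already handled (with exponent $q$ in place of $p$, so trivially $q\le q$) to obtain $L^q$-stochastic integrability of $\phi$ and, in particular, the almost-sure identity
\[
\int_0^T\int_A\phi(t)\,d\mu\,dW(t)=\int_A\eta\,d\mu.
\]
Then one reads BDG at exponent $p$ \emph{backwards} on this identity:
\[
\Big\|\int_A\phi\,d\mu\Big\|_{L^p(\O;L^2(0,T))}\le c_p\Big\|\int_A\eta\,d\mu\Big\|_{L^p(\O)}\le c_p\,\mu(A)^{1/q'}\|\eta\|_{L^p(\O;L^q(\OO))},
\]
which gives $\Phi_A\in L^p(\O;L^2(0,T))$ without ever needing Minkowski in the wrong direction. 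A second application of Proposition~\ref{prop:NVW}~(2) then yields $L^p$-stochastic integrability. Adding this case split repairs your argument.
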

\begin{proof}
Note that the stochastic integral in (3) is well-defined. Indeed, by the
adaptedness of $\phi$ and (1), one has for almost all $x\in \OO$,
$\psi(\cdot,x)$ is adapted. Therefore, (2) shows that for almost all $x\in
\OO$, $\int_0^T \psi(t,x) \, d W(t)$ exists in $L^p(\O)$, and by Doob's
maximal inequality and the Bukrholder--Davis--Gundy inequality, see
\cite[Theorem 17.7]{Kal}, one has
\begin{equation}\label{eq:DoobBDGpsi}
c_p^{-1} \|\psi(\cdot, x)\|_{E} \leq  \Big\|\int_0^T \psi(t,x) \, d W(t)\Big\|_{L^p(\O)}\leq C_p \|\psi(\cdot, x)\|_{E},
\end{equation}
where $E = L^p(\O;L^2(0,T))$

First assume $p\leq q$. Fix $A\in \Sigma$ with finite measure. We claim that
$\phi\in L^p(\O;L^2(0,T;L^1(A)))$. Indeed, one has
\begin{align*}
\Big\| &\int_A |\phi| \, d\mu\Big\|_{E}  =  \Big\|\int_A |\psi(\cdot,x)| \, d\mu(x)\Big\|_{E}
\\ &  {\leq}  \int_A \|\psi(\cdot, x)\|_{E}\, d\mu(x) & \text{(Minkowski's inequality)}
\\  & {\leq} c_p \int_A \Big\|\int_0^T \psi(t,x) \, d W(t)\Big\|_{L^p(\O)} \, d\mu(x) & \text{(by \eqref{eq:DoobBDGpsi})}
\\ & {\leq} c_p \mu(A)^{\frac{1}{q'}}\Big(\int_A \Big\|\int_0^T \psi(t,x) \, d W(t)\Big\|_{L^p(\O)}^q \, d\mu(x)\Big)^{1/q} & \text{(H\"older's inequality)}
\\ & {\leq}
c_p \mu(A)^{1/q'} \|\eta\|_{L^p(\O;L^q(\OO))} & \text{(Minkowski's inequality)}
\end{align*}
This proofs the claim. In particular, one has $\int_A \phi \, d\mu\in E$.

Note that by the stochastic Fubini theorem one has
\[\int_A \eta(\omega) \, d\mu  =  \Big(\int_0^T \int_A  \psi(t,x) \, d\mu(x) \, d W(t)\Big)(\omega) =  \Big(\int_0^T \int_A \phi(t) \, d\mu \, d W(t)\Big)(\omega).\]
Hence, Proposition \ref{prop:NVW} (2) implies the result.

Now assume $p>q$. Then by the above case on obtains that $\phi$ is $L^q$-stochastically integrable on $[0,T]$. Moreover,
\begin{align*}
\Big\|\int_A \phi(t) \, d\mu\Big\|_{L^p(\O;L^2(0,T)} & \leq c_p \Big\|\int_0^T \int_A \phi(t) \, d\mu \, d W(t)\Big\|_{L^p(\O)}
\\ & = \Big\|\int_A \eta\, d\mu\Big\|_{L^p(\O)} \leq \mu(A)^{\frac1{q'}} \|\eta\|_{L^p(\O;L^q(\OO))}.
\end{align*}
Therefore, another application of Proposition \ref{prop:NVW} (2) shows that $\phi$ is actually $L^p$-stochastically integrable on $[0,T]$.
\end{proof}

\begin{remark}\label{rem:isomUMD}
Let us explain how the above result can also be applied to $H^{s,q}(\T)$ which
is isomorphic to a $L^q(\T)$. Let $J:H^{s,q}(\T)\to L^q(\T)$ be an
isomorphism. Then for a process $\phi:[0,T]\times\O\to H^{s,q}(\T)$ let
$\tilde{\phi} = J \phi$. The above results can be applied to $\tilde{\phi}$.
Conversely, if $\tilde{\eta} = \int_0^T \tilde{\phi}(t) \, d W(t)$, then we
define
\[\eta = J^{-1}\tilde{\eta}.\]
Moreover, $\|\phi\|_{L^p(\O;H^{s,q}(\T;L^2(0,T)))}<\infty$ is equivalent to
stochastic integrability of $\phi$. It is well-known, see
\cite[8.24]{Stein93}, that $J$ extends to a isomorphism from
$H^{s,q}(\T;L^2(0,T)))$ into $L^q(\T;L^2(0,T))$.

In a similar way, the results extend to arbitrary $X$ which are isomorphic to
a closed subspace of any $L^q(\OO)$.
\end{remark}
\end{appendix}

\def\polhk#1{\setbox0=\hbox{#1}{\ooalign{\hidewidth
  \lower1.5ex\hbox{`}\hidewidth\crcr\unhbox0}}} \def\cprime{$'$}
\providecommand{\bysame}{\leavevmode\hbox to3em{\hrulefill}\thinspace}


\begin{thebibliography}{10}

\bibitem{Brz1}
{\sc Z.~Brze{\'z}niak}, \emph{Stochastic partial differential equations in
  {M}-type {$2$} {B}anach spaces}, Potential Anal. \textbf{4} (1995), no.~1,
  1--45.

\bibitem{BCFapprx}
{\sc Z.~Brze{\'z}niak, M.~Capi{\'n}ski, and F.~Flandoli}, \emph{A convergence
  result for stochastic partial differential equations}, Stochastics
  \textbf{24} (1988), no.~4, 423--445.

\bibitem{BNVW}
{\sc Z.~Brze{\'z}niak, J.M.A.M.~van Neerven, M.C. Veraar, and L.W. Weis},
  \emph{It\^o's formula in {UMD} {B}anach spaces and regularity of solutions of
  the {Z}akai equation}, J. Differential Equations \textbf{245} (2008), no.~1,
  30--58.

\bibitem{DP82}
{\sc G.~Da~Prato}, \emph{Regularity results of a convolution stochastic
  integral and applications to parabolic stochastic equations in a {H}ilbert
  space}, Confer. Sem. Mat. Univ. Bari \textbf{182} (1982), 17.

\bibitem{DPIT}
{\sc G.~Da~Prato, M.~Iannelli, and L.~Tubaro}, \emph{Some results on linear
  stochastic differential equations in {H}ilbert spaces}, Stochastics
  \textbf{6} (1981/82), no.~2, 105--116.

\bibitem{DPZ}
{\sc G.~Da~Prato and J.~Zabczyk}, \emph{Stochastic equations in infinite
  dimensions}, Encyclopedia of Mathematics and its Applications, vol.~44,
  Cambridge University Press, Cambridge, 1992.

\bibitem{Dettunpub}
{\sc E.~Dettweiler}, \emph{{Stochastic integration of Banach space valued
  functions.}}, {Stochastic space-time models and limit theorems, Math. Appl.,
  D. Reidel Publ. Co. 19, 53-59 (1985).}, 1985.

\bibitem{EG77}
{\sc R.~E. Edwards and G.~I. Gaudry}, \emph{Littlewood-{P}aley and multiplier
  theory}, Springer-Verlag, Berlin, 1977, Ergebnisse der Mathematik und ihrer
  Grenzgebiete, Band 90.

\bibitem{EN}
{\sc K.-J. Engel and R.~Nagel}, \emph{One-parameter semigroups for linear
  evolution equations}, Graduate Texts in Mathematics, vol. 194,
  Springer-Verlag, New York, 2000.

\bibitem{GraClass}
{\sc L.~Grafakos}, \emph{Classical {F}ourier analysis}, second ed., Graduate
  Texts in Mathematics, vol. 249, Springer, New York, 2008.

\bibitem{Kal}
{\sc O.~Kallenberg}, \emph{Foundations of modern probability}, second ed.,
  Probability and its Applications (New York), Springer-Verlag, New York, 2002.

\bibitem{Kry}
{\sc N.V. Krylov}, \emph{An analytic approach to {SPDE}s}, Stochastic partial
  differential equations: six perspectives, Math. Surveys Monogr., vol.~64,
  Amer. Math. Soc., Providence, RI, 1999, pp.~185--242.

\bibitem{Kry00}
{\sc N.V. Krylov}, \emph{S{PDE}s in {$L\sb q((0,\tau]\!],L\sb p)$} spaces},
  Electron. J. Probab. \textbf{5} (2000), Paper no.\ 13, 29 pp. (electronic).

\bibitem{KR79}
{\sc N.V. Krylov and B.L. Rozovski{\u\i}}, \emph{Stochastic evolution
  equations}, Current problems in mathematics, Vol. 14 (Russian), Akad. Nauk
  SSSR, Vsesoyuz. Inst. Nauchn. i Tekhn. Informatsii, Moscow, 1979,
  pp.~71--147, 256.

\bibitem{KuWe}
{\sc P.C. Kunstmann and L.W. Weis}, \emph{Maximal {$L\sb p$}-regularity for
  parabolic equations, {F}ourier multiplier theorems and {$H\sp
  \infty$}-functional calculus}, Functional analytic methods for evolution
  equations, Lecture Notes in Math., vol. 1855, Springer, Berlin, 2004,
  pp.~65--311.

\bibitem{KwWo}
{\sc S.~Kwapie{\'n} and W.~A. Woyczy{\'n}ski}, \emph{Random series and
  stochastic integrals: single and multiple}, Probability and its Applications,
  Birkh\"auser Boston Inc., Boston, MA, 1992.

\bibitem{Lun}
{\sc A.~Lunardi}, \emph{Analytic semigroups and optimal regularity in parabolic
  problems}, Progress in Nonlinear Differential Equations and their
  Applications, 16, Birkh\"auser Verlag, Basel, 1995.

\bibitem{Luninterp}
{\sc A.~Lunardi}, \emph{Interpolation {T}heory}, Appunti, Scuola Normale
  Superiore Pisa, 1999.

\bibitem{Marcink}
{\sc J.~Marcinkiewicz}, \emph{{Sur les multiplicateurs des s\'eries de
  Fourier.}}, Stud. Math. \textbf{8} (1939), 78--91.

\bibitem{NVW1}
{\sc J.M.A.M.~van Neerven, M.C. Veraar, and L.W. Weis}, \emph{Stochastic
  integration in {UMD} {B}anach spaces}, Ann. Probab. \textbf{35} (2007),
  no.~4, 1438--1478.

\bibitem{NVW11}
{\sc J.M.A.M.~van Neerven, M.C. Veraar, and L.W. Weis}, \emph{{Maximal
  $L^p$-regularity for stochastic evolution equations}}, SIAM J. Math. Anal.
  \textbf{44} (2012), no.~3, 1372--1414.

\bibitem{NVW10}
{\sc J.M.A.M.~van Neerven, M.C. Veraar, and L.W. Weis}, \emph{{Stochastic
  maximal $L^p$-regularity}}, Ann. Probab. \textbf{40} (2012), no.~2, 788--812.

\bibitem{Nh}
{\sc A.L. Neidhardt}, \emph{{Stochastic Integrals in $2$-Uniformly Smooth
  Banach Spaces}}, Ph.D. thesis, University of Wisconsin, 1978.

\bibitem{Ondrej}
{\sc M.~Ondrej{\'a}t}, \emph{Uniqueness for stochastic evolution equations in
  {B}anach spaces}, Dissertationes Math. (Rozprawy Mat.) \textbf{426} (2004),
  63.

\bibitem{Par2}
{\sc E.~Pardoux}, \emph{Stochastic partial differential equations and filtering
  of diffusion processes}, Stochastics \textbf{3} (1979), no.~2, 127--167.

\bibitem{PreRo}
{\sc C.~Pr{\'e}v{\^o}t and M.~R{\"o}ckner}, \emph{A concise course on
  stochastic partial differential equations}, Lecture Notes in Mathematics,
  vol. 1905, Springer, Berlin, 2007.

\bibitem{ReSi}
{\sc M.~Reed and B.~Simon}, \emph{Methods of modern mathematical physics. {I}.
  {F}unctional analysis}, Academic Press, New York, 1972.

\bibitem{Rozov}
{\sc B.L. Rozovski{\u\i}}, \emph{Stochastic evolution systems}, Mathematics and
  its Applications (Soviet Series), vol.~35, Kluwer Academic Publishers Group,
  Dordrecht, 1990, Linear theory and applications to nonlinear filtering,
  Translated from the Russian by A. Yarkho.

\bibitem{SchmTr}
{\sc H.-J. Schmeisser and H.~Triebel}, \emph{Topics in {F}ourier analysis and
  function spaces}, A Wiley-Interscience Publication, John Wiley \& Sons Ltd.,
  Chichester, 1987.

\bibitem{Seidopt}
{\sc J.~Seidler}, \emph{Exponential estimates for stochastic convolutions in
  2-smooth {B}anach spaces}, Electron. J. Probab. \textbf{15} (2010), no. 50,
  1556--1573.

\bibitem{Stein93}
{\sc E.M. Stein}, \emph{Harmonic analysis: real-variable methods,
  orthogonality, and oscillatory integrals}, Princeton Mathematical Series,
  Monographs in Harmonic Analysis, III, vol.~43, Princeton University Press,
  Princeton, NJ, 1993.

\bibitem{Tr1}
{\sc H.~Triebel}, \emph{Interpolation theory, function spaces, differential
  operators}, second ed., Johann Ambrosius Barth, Heidelberg, 1995.

\end{thebibliography}
\end{document}